\setlist{listparindent=0pt,parsep=3pt}
\newcommand{\TitleWithUrl}[1]{\IfEmptyBibField{doi}%
  {\IfEmptyBibField{url}{\textit{#1}}%
    {\IfEmptyBibField{eprint}{\href {\BibField{url}}{\textit{#1}}}{\textit{#1}}}%
    }%
  {\href {https://doi.org/\BibField{doi}}{\textit{#1}}}}
\renewcommand{\eprint}[1]{\IfEmptyBibField{url}{\url{#1}}%
  {\href {\BibField{url}}{#1}}}
\newtheorem{theorem}{Theorem}[section]
\newtheorem{lemma}[theorem]{Lemma}
\newtheorem{fact}[theorem]{Fact}
\theoremstyle{definition}
\newtheorem{definition}[theorem]{Definition}
\theoremstyle{remark}
\newtheorem{remark}[theorem]{Remark}
\numberwithin{equation}{section}
\newcommand{\qthreep}{\mathbb{Q}^3_+}
\newcommand{\bfOmega}{\mathbf{\Omega}}
\newcommand{\hthree}{\mathbb{H}^3(-1)}
\newcommand{\dsthree}{\mathbb{S}^3_1(1)}
\newcommand{\sltc}{\mathrm{SL}(2,\mathbb{C})}
\newcommand{\lfour}{\mathbb{L}^4}
\newcommand{\hermtwo}{\mathrm{Herm}(2,\mathbb{C})}
\newcommand{\mcL}{\mathcal{L}}
\title[Bj\"{o}rling problem for zero mean curvature surfaces in $\qthreep$]{Bj\"{o}rling problem for zero mean curvature surfaces in the three-dimensional light cone}
\author{Joseph Cho}
\address[Joseph Cho]{Institute of Discrete Mathematics and Geometry, TU Wien, Wien, 1040, Austria}
\email{jcho@geometrie.tuwien.ac.at}
\author{So Young Kim}
\address[So Young Kim]{Department of Mathematics, Korea University, Seoul, 02841, Republic of Korea}
\email{sykim0224@gmail.com}
\author{Dami Lee}
\address[Dami Lee]{Department of Mathematics, Indiana University, Bloomington, IN, 47405, USA}
\email{damilee@indiana.edu}
\author{Wonjoo Lee}
\address[Wonjoo Lee]{Department of Mathematics, Korea University, Seoul, 02841, Republic of Korea}
\email{wontail123@korea.ac.kr}
\author{Seong-Deog Yang}
\address[Seong-Deog Yang]{Department of Mathematics, Korea University, Seoul, 02841, Republic of Korea}
\email{sdyang@korea.ac.kr}
\subjclass[2020]{Primary 53A10; Secondary 53B30.}
\keywords{Zero mean curvature surfaces, Bj\"orling representation}
\begin{document}
\begin{abstract}
	We solve the Bj\"{o}rling problem for zero mean curvature surfaces in the three-dimensional light cone.
	As an application, we construct and classify all rotational zero mean curvature surfaces.
\end{abstract}

\maketitle

\section{Introduction}

The \emph{classical Bj\"{o}rling problem} \cite{bjorling_integrationem_1844} poses the following question: Suppose that a curve $\gamma : I \to \mathbb{E}^3$ and a unit vector field $N$ along $\gamma$ in the Euclidean 3-space $\mathbb{E}^3$ are given so that
	\[
		\dot{\gamma} \cdot N = 0, 
	\]
where $\dot{\phantom{\gamma}}$ denotes the differentiation with respect to $u \in I$, and $\cdot$ is the standard Euclidean inner product.
The goal is to find a minimal surface $X$ which contains $\gamma$ and whose unit normal along $\gamma$ is the prescribed $N$.
Due to the Weierstrass representation of minimal surfaces \cite{weierstrass_untersuchungen_1866}, this problem can be solved via analytic extensions of $\gamma$ and $N$ to the complex plane:
	\[
		X(z) = \int_{u_0}^z (\dot\gamma(w) - i N(w) \times \dot\gamma(w)) \dif{w}
	\]
where $\times$ denotes the standard Euclidean cross product.
The curve and the prescribed normal together are called the \emph{Björling data}.
Such Bj\"{o}rling type problems are well studied across various types of surfaces in various space forms (see, for example, \cite{alias_bjorling_2003, asperti_bjorling_2006, brander_bjorling_2010, brander_bjorling_2018, dussan_bjorling_2017, kim_spacelike_2011, kim_prescribing_2007, yang_bjorling_2017}).

The Björling data can be slightly modified by noting that $\mathcal{L} := N \times \dot{\gamma}$ is perpendicular to $\dot{\gamma}$ and of the same length to $\dot{\gamma}$.
Thus if $\mathcal{L}$ is prescribed then one can recover $N$ via $N = \frac{\dot{\gamma} \times \mathcal{L} }{ |\dot{\gamma} \times \mathcal{L}|}$, allowing us to adopt $\gamma$ and $\mathcal{L}$ instead of $\gamma$ and $N$ as Bj\"{o}rling data.
Geometrically, the vector field $\mathcal{L}$ of the Bj\"{o}rling data is equivalent to the prescription of the tangent vector field of the surface that is perpendicular to $\dot\gamma$ along $\gamma$.
This change in viewpoint of the prescribed data for the Björling problem proved to be useful for obtaining the \emph{(singular) Bj\"{o}rling representation}  \cite{kim_prescribing_2007} for maxfaces \cite{umehara_maximal_2006} and generalized timelike minimal surfaces \cite{kim_spacelike_2011} in Lorentz $3$-space, and also the Bj\"{o}rling representation for zero mean curvature surfaces in isotropic $3$-space \cite{seo_zero_2021}.

Our goal of the paper is to solve the Bj\"{o}rling problem for zero mean curvature surfaces in the $3$-dimensional light cone $\qthreep$.
In the similar cases of other quadrics of Lorentz $4$-space, namely, the hyperbolic $3$-space $\hthree$ and de Sitter $3$-space $\dsthree$, the tangent space is isomorphic to either the Euclidean $3$-space or the Lorentz $3$-space, respectively.
Thus one can use the cross product in the tangent space to obtain Bj\"{o}rling representations \cite{yang_bjorling_2017} in an analogous manner to the Euclidean case.
However, the tangent space to $\qthreep$ is isomorphic to isotropic $3$-space, so that it does not have a cross product structure.
Therefore, for the Bj\"{o}rling problem in $\qthreep$, we use the alternative viewpoint and assume that the given Bj\"{o}rling data consists of a spacelike analytic curve $\gamma$, together with a spacelike vector field $\mathcal{L}$ along the curve, and find the zero mean curvature surface which contains $\gamma$ and has $\mathcal{L}$ as a tangent vector field along $\gamma$.

The paper is structured as follows: After reviewing the basic geometry and surface theory of $\qthreep$ in Section~\ref{sec:prem}, we describe the process to solve the Bj\"{o}rling problem for a given spacelike analytic curve with prescribed tangent vector field in Section~\ref{Sec:202302260350PM}, culminating in the \emph{Bj\"orling representation for zero mean curvature surfaces in $3$-dimensional light cone} (see Theorem~\ref{thm:main}).
As an application, we construct and classify (see Theorem~\ref{thm:class}) rotationally invariant zero mean curvature surfaces in $\qthreep$ in Section~\ref{Sec:202302260929PM}.
Furthermore, many surfaces admitting Weierstrass representations in various space forms with indefinite metric can be extended across lightlike lines (see, for example, \cite{akamine_reflection_2021, akamine_reflection_2022, akamine_space-like_2019, fujimori_analytic_2022, fujimori_zero_2015-1, umehara_hypersurfaces_2019}); we give an example of such surface in $\qthreep$ in Section~\ref{sec:analytic}.

\section{Preliminaries}\label{sec:prem}
We first briefly review the geometry of three-dimensional light cone, and the surface theory within. For a detailed description, see \cite{liu_surfaces_2007, liu_hypersurfaces_2008, liu_representation_2011}.

\subsection{Hermitian matrix model of three-dimensional light cone}
Let $\lfour$ denote the Lorentz $4$-space, with inner product
	\[
		\langle (t_1,x_1,y_1,z_1), (t_2,x_2,y_2,z_2) \rangle = -t_1 t_2 + x_1 x_2 + y_1 y_2 + z_1 z_2.
	\]
The Lorentz $4$-space can be identified with the set of $2\times2$ Hermitian matrices $\hermtwo$ via
	\[
		(t,x,y,z) \sim \begin{pmatrix} t+z & x+iy \\ x-iy & t-z \end{pmatrix}
	\]
where we will abuse notation between vectors and matrices.
Then for any $V, W \in \hermtwo \cong \lfour$,
	\[
		\langle V, W \rangle = -\frac{1}{2} \left(\det{(V+W)} - \det{V} - \det{W} \right),
	\]
so that
	\[
		|V|^2 := \langle V, V \rangle = -\det{V}.
	\]
We note here that the symmetric bilinear form $\langle \cdot , \cdot \rangle$ is well-defined for all $A \in \mathrm{M}(2, \mathbb{C})$.

Defining
	\[
		f_0 := \begin{pmatrix} 0 & 0 \\ 0 & -2 \end{pmatrix}, \quad
		f_1 := \begin{pmatrix} 0 & 1 \\ 1 & 0 \end{pmatrix}, \quad
		f_2 := \begin{pmatrix} 0 & i \\ -i & 0 \end{pmatrix}, \quad
		f_3 := \begin{pmatrix} 1 & 0 \\ 0 & 0 \end{pmatrix},
	\]
we see that $\{ f_0, f_1, f_2, f_3\}$ form an asymptotically orthonormal basis of $\mathbb{L}^4 \cong \hermtwo$.

In the Hermitian model, hyperbolic $3$-space $\hthree$, $3$-dimensional light cone $\qthreep$, and de Sitter $3$-space $\dsthree$ can be defined as quadrics in $\lfour$ via
	\begin{align*}
		\hthree &:= \{ X \in \hermtwo :  \langle X, X \rangle = -1, \operatorname{tr}{X}>0\}, \\
		\qthreep &:= \{ X \in \hermtwo :  \langle X, X \rangle = 0, \operatorname{tr}{X}>0\}, \\
		\dsthree &:= \{ X \in \hermtwo :  \langle X, X \rangle = 1 \},
	\end{align*}
respectively.
Note that for any $X \in \qthreep$, there is some $F \in \sltc$ such that
	\[
		X = F \begin{pmatrix} 1 & 0 \\ 0 & 0 \end{pmatrix} F^\star = F f_3 F^\star
	\]
where $F^\star$ denotes the conjugate transpose of $F$.

When we visualize surfaces in $\qthreep$, we will use the following stereographic projection
	\[
		\qthreep \ni (t, x,y,z)  \mapsto \left( \frac{x}{1+t}, \frac{y}{1+t}, \frac{z}{1+t} \right).
	\]
Then $\qthreep$ is identified with $\{ (a,b,c) \in \mathbb{R}^3 :  0 < a^2 + b^2 + c^2 < 1 \}$.

\subsection{Surface theory and Weierstrass-type representation}
Let $D$ be a two-dimensional simply-connected domain, and suppose $X : D \to \qthreep$ is a spacelike immersion, that is, the induced metric on the tangent plane of $X$ at every point $p \in D$ is Riemannian.
Then for conformal coordinates $(u,v) \in D$ with complex structure given via $z = u + iv$, suppose that the first fundamental form is given by 
	\[
		\dif{s}^2 = \phi^2 (\dif{u}^2 + \dif{v}^2) = \phi^2 \dif{z}\dif{\bar{z}}
	\]
for some $\phi: D \to \mathbb{R}^\times$.
Furthermore, we have $\langle X, X_u \rangle = \langle X, X_v \rangle = 0$ so that there exists a unique lightlike $n : D \to \lfour$ such that
	\begin{equation}\label{eqn:gauss}
		\langle n, n \rangle = \langle n, X_u \rangle = \langle n, X_v \rangle = 0, \quad \langle n, X \rangle = 1.
	\end{equation}
Such $n$ is called the \emph{Gauss map} of $X$, and the coefficients of the second fundamental form is then computed in terms of $n$ via
	\[
		L := -\langle X_u, n_u \rangle, \quad M := -\langle X_u, n_v \rangle = -\langle X_v, n_u \rangle,\quad N := -\langle X_v, n_v \rangle.
	\]
Hence, the shape operator $S$ is
	\[
		S = \phi^{-2}\begin{pmatrix} L & M \\ M & N \end{pmatrix}
	\]
with the (extrinsic) Gaussian curvature $K$ and mean curvature $H$ given by
	\[
		K = \det{S}, \quad H = \frac{1}{2}\operatorname{tr} S.
	\]
Those surfaces $X$ with $H \equiv 0$ will be referred to as \emph{zero mean curvature surfaces}.

We recall the Weierstrass-type representation for zero mean curvature surfaces in the three-dimensional light cone \cite[Theorem~3.2]{liu_representation_2011} (see also \cite[\S~4.2]{pember_weierstrass-type_2020} and \cite[Theorem~39]{seo_zero_2021}):
\begin{fact}\label{fact:Wrep}
	Any zero mean curvature surface $X : D \to \qthreep$ can locally be represented as
		\[
			X = F f_3 F^\star = F \begin{pmatrix} 1 & 0 \\ 0 & 0 \end{pmatrix} F^\star
		\]
	where $F : D \to \sltc$ satisfies
		\[
			\dif{F} F^{-1} = \begin{pmatrix} G & -G^2 \\ 1 & -G \end{pmatrix}\Omega
		\]
	for some meromorphic function $G : D \to \mathbb{C}$ and holomorphic $1$-form $\Omega : D \to \mathbb{C}$ such that $G^2\Omega$ is holomorphic.
	The pair $(G, \Omega)$ is called the \emph{Weierstrass data}.
\end{fact}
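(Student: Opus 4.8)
The plan is to reduce the representation to the construction of a single holomorphic null frame, the point being that the vanishing of the mean curvature is exactly the condition that lets one ``holomorphize'' the conformal factor. Since any $X \in \qthreep$ satisfies $\det X = 0$ and $\operatorname{tr} X > 0$, it is a rank-one positive semidefinite Hermitian matrix, so locally $X = \psi \psi^\star$ for a smooth $\psi : D \to \mathbb{C}^2 \setminus \{0\}$, unique up to a unit-modulus phase. Differentiating and imposing conformality $\langle X_z, X_z\rangle = -\det X_z = 0$, the determinant of $X_z = \psi_z \psi^\star + \psi (\psi_{\bar z})^\star$ factors as $\partial_z(\psi_1/\psi_2)$ times its conjugate, so the projectivized map $[\psi] : D \to \mathbb{CP}^1$ must be holomorphic or antiholomorphic; after fixing the orientation of $D$ I may assume it is holomorphic. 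Hence $\psi = \rho\, h$ with $h : D \to \mathbb{C}^2$ holomorphic and nowhere zero and $\rho > 0$, so that $X = \lambda\, h h^\star$ with $\lambda := \rho^2$.

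The heart of the argument is to translate $H \equiv 0$ into a statement about $\lambda$. Setting $p := \lambda_z h + \lambda h_z$, one has $X_z = p h^\star$, so the normal plane is spanned by the two null directions $h h^\star$ and $p p^\star$; since $\langle n, X\rangle = 1$ forbids the $h h^\star$ direction, the Gauss map of \eqref{eqn:gauss} is forced to be a specific real multiple of $p p^\star$. A direct second-derivative computation, in which the cross terms cancel, yields
\[
	X_{z\bar z} = \tfrac{1}{\lambda}\, p p^\star + \left(\lambda_{z\bar z} - \tfrac{|\lambda_z|^2}{\lambda}\right) h h^\star ,
\]
and combining this with the identity $H = \tfrac{2}{\phi^2}\langle X_{z\bar z}, n\rangle$ (immediate from the definitions of $L,N$ and $H=\tfrac12\operatorname{tr} S$, together with $\Delta X = 4X_{z\bar z}$) shows that $H$ is a nonzero multiple of $\partial_z \partial_{\bar z} \log \lambda$. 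Thus $H \equiv 0$ if and only if $\log \lambda$ is harmonic. I expect this to be the main obstacle: it requires identifying the Gauss map intrinsically and carrying out the normal-component computation carefully, since the cross product that organizes the Euclidean case is unavailable here.

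Once $\log \lambda$ is harmonic, on a simply connected patch it is the real part of a holomorphic function, so $\lambda = |\sigma|^2$ for a nowhere-zero holomorphic $\sigma$; absorbing $\sigma$ into $h$ I may assume $\psi$ itself is holomorphic with $X = \psi\psi^\star$. It then remains to exhibit the Weierstrass data. I would set $G := \psi_{1,z}/\psi_{2,z}$ (meromorphic) and $\omega := \psi_{2,z}^2/W$, where $W := \psi_1 \psi_{2,z} - \psi_2 \psi_{1,z}$ is the Wronskian of the components; $W$ is holomorphic and nowhere zero precisely because $X$ is an immersion. A direct check gives
\[
	\psi_z = \begin{pmatrix} G & -G^2 \\ 1 & -G \end{pmatrix}\omega\, \psi =: A\psi .
\]
Since $A$ is trace-free, the Wronskian of any two solutions of $\Psi_z = A\Psi$ is constant, so I may complete $\psi$ to a holomorphic frame $F = (\,\psi \;\; w\,) : D \to \sltc$ by choosing a second solution $w$ and normalizing $\det F \equiv 1$. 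Then $F$ is holomorphic with first column $\psi$, so $X = F f_3 F^\star$ and $\dif{F}\, F^{-1} = A\,\dif{z}$ has exactly the asserted shape, with the holomorphic $1$-form $\Omega = \omega\,\dif{z} = (\psi_{2,z}^2/W)\,\dif{z}$ and with $G^2 \Omega = (\psi_{1,z}^2/W)\,\dif{z}$ holomorphic as required.
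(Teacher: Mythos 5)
Your proposal is correct, but note that the paper does not prove this statement at all: it is imported as a ``Fact'' with citations to Liu, Pember, and Seo--Yang, so there is no in-paper argument to compare against. What you have written is a self-contained derivation in the spirit of those references' Hermitian-matrix approach, and every step checks out: the rank-one factorization $X=\psi\psi^\star$ is valid since $\det X=0$ and $\operatorname{tr}X>0$ force positive semidefiniteness of rank one; conformality gives $0=-\det X_z=(\psi\wedge\psi_z)\overline{(\psi\wedge\psi_{\bar z})}$ (up to sign), and since both factors cannot vanish simultaneously at a point (that would make $X_z$ a multiple of $X$, contradicting spacelikeness), connectedness of $D$ forces $[\psi]$ to be globally holomorphic or antiholomorphic; your formula for $X_{z\bar z}$ and the identification $n=c\,pp^\star$ with $c=\langle pp^\star,X\rangle^{-1}$ are correct, and $\langle X_{z\bar z},n\rangle$ is indeed a nonzero multiple of $\partial_z\partial_{\bar z}\log\lambda$, so $H\equiv 0$ is equivalent to harmonicity of $\log\lambda$; and the algebra $A\psi=\omega(\psi_1-G\psi_2)\binom{G}{1}=\psi_z$ with $G=\psi_{1,z}/\psi_{2,z}$, $\omega=\psi_{2,z}^2/W$ is exact, with $\phi^2=|W|^2$ confirming that the immersion condition is precisely $W\neq 0$. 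Two minor points worth flagging: the case $\psi_{2,z}\equiv 0$ (i.e.\ $G\equiv\infty$) is not covered by your choice of gauge and needs the usual caveat of composing with an isometry of $\qthreep$ first; and the reduction to the holomorphic case by reversing the orientation of $D$ should be stated as part of the ``local representation'' convention, since the Gauss map normalization $\langle n,X\rangle=1$ is orientation-independent. Neither affects the substance of the argument.
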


\section{Solution of the Bj\"{o}rling problem}\label{Sec:202302260350PM}
Using the Weierstrass-type representation in Fact~\ref{fact:Wrep}, we will now solve the Björling problem for zero mean curvature surfaces in $\qthreep$.
Unlike the cases of hyperbolic $3$-space $\hthree$ and de Sitter $3$-space $\dsthree$, the Bj\"{o}rling data we consider will be an analytic curve together with a tangent vector field due to the lack of cross product structure in the tangent space of $\qthreep$.

\subsection{The conformality condition for the Bj\"orling data} 
For an interval $I$ with parameter $u \in I$, let $\gamma : I \to \qthreep$ be a spacelike analytic curve and $\mathcal{L}$ be an analytic spacelike vector field along $\gamma$ such that
	\begin{equation}\label{Eq:202301221105PM}
		\langle \dot{\gamma}, \dot{\gamma} \rangle = \langle \mathcal{L}, \mathcal{L} \rangle, \quad
		\langle \dot{\gamma}, \mathcal{L} \rangle = 0, \quad
		\langle \gamma, \mathcal{L} \rangle = 0.
	\end{equation}
Geometrically, the conditions \eqref{Eq:202301221105PM} ensure that $\mathcal{L}$ will be a tangent vector field; thus, we refer to \eqref{Eq:202301221105PM} as the \emph{conformality condition}.
Our goal is to find a zero mean curvature surface $X : D \to \qthreep$ which contains $\gamma$ and which has $\mathcal{L}$ as a tangent vector field along $\gamma$.
%
%

Note that if there is a such a zero mean curvature surface $X$, then 
	\begin{equation}\label{Eq:202301221148PM}
		\dif{X} = \dif{F} f_3 F^\star = \dif{F} F^{-1} F  f_3 F^\star = \begin{pmatrix} G & -G^2 \\ 1 & -G \end{pmatrix}\Omega X =: \bfOmega X.
	\end{equation}
Thus, from $\gamma$ and $\mathcal{L}$, we will construct such $1$-form $\bfOmega$.	

\begin{remark}
	An important distinction from the cases of $\hthree$ and $\dsthree$ is that $X \in \qthreep$ is not invertible since $\det X = 0$.
	Thus, instead of considering $\dif{X} X^{-1}$ as in the cases of $\hthree$ and $\dsthree$, we consider the equation as in \eqref{Eq:202301221148PM}.
\end{remark}

For Bj\"{o}rling data $\gamma$ and $\mathcal{L}$ satisfying conformality condition \eqref{Eq:202301221105PM}, let $\gamma(u) = X(u,0)$ and $\mathcal{L}(u) := X_v(u,0)$.
Then on one dimensional domain $I$, \eqref{Eq:202301221148PM} implies that we must solve for $\bfOmega =: \tilde\Omega \dif{u}$ satisfying
	\begin{equation}\label{Eq:202301230639AM}
		\Lambda(u) := \frac{1}{2} (\dot{\gamma}(u) - i \mathcal{L}(u)) = X_z (u,0) = \tilde\Omega(u) X(u,0) = \tilde\Omega(u) \gamma(u).
	\end{equation}
We claim that there is a unique $\tilde\Omega$ which solves \eqref{Eq:202301230639AM}.

Note that $\det{\Lambda}=0$ because of \eqref{Eq:202301221105PM}, and that $\det{\gamma}=0$ since it is a curve in $\qthreep$.
By standard theory we know that 
	\[
		\det{\tilde\Omega} = \operatorname{tr}{\tilde\Omega} =0.
	\]
Therefore 
	\[
		\bfOmega = \begin{pmatrix} G & -G^2 \\ 1 & -G \end{pmatrix} \Omega
	\]
for some functions $G$ and $1$-form $\Omega$.
We rewrite \eqref{Eq:202301230639AM} as
	\begin{equation}\label{Eq:202301230644AM}
		\begin{pmatrix} \Lambda_{11} & \Lambda_{12} \\ \Lambda_{21} & \Lambda_{22} \end{pmatrix}
			= \begin{pmatrix} G \tilde\Omega& -G^2 \tilde\Omega\\ \tilde\Omega & -G \tilde\Omega \end{pmatrix} 
			\begin{pmatrix} \gamma_{11} & \gamma_{12}  \\ \gamma_{21}  & \gamma_{22} \end{pmatrix}.
	\end{equation}
By examining $(1,1)$ and $(2,1)$ components of \eqref{Eq:202301230644AM}, we see that $G=G_1$ and $\tilde\Omega = \tilde\Omega_1$ where
	\begin{equation}\label{Eq:202301220834PM}
		G_1 = \frac{\Lambda_{11}}{\Lambda_{21}}, \quad
		\tilde\Omega_1 = \frac{\Lambda_{21}^2}{ \Lambda_{21} \gamma_{11} - \Lambda_{11} \gamma_{21}}. 
	\end{equation}
On the other hand, by examining $(1,2)$ and $(2,2)$ components of \eqref{Eq:202301230644AM}, we see that $G=G_2$ and $\tilde\Omega = \tilde\Omega_2$ where 
	\begin{equation}\label{Eq:202301220835PM}
		G_2 = \frac{\Lambda_{12}}{\Lambda_{22}}, \quad
		\tilde\Omega_2 = \frac{\Lambda_{22}^2}{ \Lambda_{22} \gamma_{12} - \Lambda_{12} \gamma_{22}}. 
	\end{equation}

It is immediate to see $G_1=G_2$ from the fact that $\det{\Lambda} =0$.
Then
	\begin{align*}
		\tilde\Omega_1 &= \frac{\Lambda_{21}^2}{ \Lambda_{21} \gamma_{11} - \frac{\Lambda_{12} \Lambda_{21}}{\Lambda_{22}} \gamma_{21}}
			= \frac{\Lambda_{21} \Lambda_{22}}{ \Lambda_{22} \gamma_{11} - \Lambda_{12} \gamma_{21}},\\
		\tilde\Omega_2  &= \frac{\Lambda_{22}^2}{ \Lambda_{22} \gamma_{12} - \frac{\Lambda_{11} \Lambda_{22}}{\Lambda_{21}} \gamma_{22}}
			= \frac{\Lambda_{21} \Lambda_{22}}{\Lambda_{21} \gamma_{12} - \Lambda_{11} \gamma_{22}}.
	\end{align*}
Hence, it is enough to show that the denominators are the same.

For this matter, we multiply \eqref{Eq:202301230644AM} with the adjunct of $\gamma$ from the right to obtain
	\[
		\begin{pmatrix} \Lambda_{11} & \Lambda_{12} \\ \Lambda_{21} & \Lambda_{22} \end{pmatrix}
		\begin{pmatrix} \gamma_{22} & -\gamma_{12}  \\ - \gamma_{21} & \gamma_{11} \end{pmatrix}
			= \begin{pmatrix} G \tilde\Omega& -G^2 \tilde\Omega\\ \tilde\Omega & -G \tilde\Omega \end{pmatrix}
			\begin{pmatrix} \det{\gamma} & 0 \\ 0 & \det{\gamma} \end{pmatrix}.
	\]
However, we have that $\det{\gamma} =0$ since it is a curve in $\qthreep$.
Therefore, the left hand side is a zero matrix.
In particular, its trace is 0; hence
	\[
		\Lambda_{11} \gamma_{22} - \Lambda_{12} \gamma_{21} -\Lambda_{21} \gamma_{12} + \Lambda_{22} \gamma_{11} = 0,
	\]
implying that $\tilde\Omega_1 = \tilde\Omega_2$.

Now, the expressions for $\tilde\Omega_1$ and $\tilde\Omega_2$ in \eqref{Eq:202301220834PM} and \eqref{Eq:202301220835PM} implies that the Bj\"{o}rling data $\gamma$ and $\mathcal{L}$ satisfying \eqref{Eq:202301221105PM} must additionally satisfy 
	\begin{equation}\label{eqn:weirdCond}
		\gamma_{11} \Lambda_{21} - \gamma_{21} \Lambda_{11} \neq 0.
	\end{equation}
Unfortunately, the conformality conditions \eqref{Eq:202301221105PM} alone does not guarantee this; one can even construct explicit examples where the expression \eqref{eqn:weirdCond} vanishes for some Bj\"orling data satisfying the conformality conditions.

\subsection{The orientability condition for the Bj\"orling data} 
To give a geometric interpretation of the condition \eqref{eqn:weirdCond}, we define the following notion.
\begin{definition}
	For two linearly independent spacelike vectors $U, V \in \lfour$ spanning a Riemannian subspace $W$ of $\lfour$, let $\{e_1, e_2\}$ be an orthonormal basis of $W$ such that there is some $F \in \sltc$ with $F f_i F^{\star} = e_i$ for $i = 1,2$.
	The \emph{signed area of $U$ and $V$}, denoted by $\mathrm{SA}(U,V)$, is defined via
		\[
			U \wedge V =: \mathrm{SA}(U,V) e_1 \wedge e_2.
		\]
\end{definition}

An important feature of the signed area is the fact that it is invariant under \emph{orientation-preserving isometries} of $\qthreep$.
On the other hand, the square of the signed area (or the squared area) can be calculated using the inner product of the ambient Lorentz $4$-space:
\begin{lemma}\label{lemma:sA}
	For linearly independent spacelike vectors $U,V$ spanning a Riemannian subspace $W$, we have
		\[
			 \mathrm{SA}(U,V)^2 = \langle U, U \rangle \langle V, V \rangle - \langle U, V \rangle^2.
		\]
\end{lemma}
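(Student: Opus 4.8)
The plan is to reduce the identity to a computation using the explicit orthonormal basis furnished by the definition of signed area. Since $U$ and $V$ span a Riemannian subspace $W$, the definition gives an orthonormal basis $\{e_1, e_2\}$ of $W$ with $F f_i F^\star = e_i$ for some $F \in \sltc$. First I would expand $U$ and $V$ in this basis, writing $U = a_1 e_1 + a_2 e_2$ and $V = b_1 e_1 + b_2 e_2$ for real scalars $a_i, b_i$. Because $\{e_1, e_2\}$ is orthonormal in the Riemannian subspace $W$, we have $\langle e_i, e_j \rangle = \delta_{ij}$, so that the inner products appearing on the right-hand side become
\begin{align*}
	\langle U, U \rangle &= a_1^2 + a_2^2, \quad \langle V, V \rangle = b_1^2 + b_2^2, \\
	\langle U, V \rangle &= a_1 b_1 + a_2 b_2.
\end{align*}

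Next I would compute the wedge product in these coordinates. Since $e_1 \wedge e_1 = e_2 \wedge e_2 = 0$ and $e_2 \wedge e_1 = - e_1 \wedge e_2$, we obtain
	\[
		U \wedge V = (a_1 b_2 - a_2 b_1)\, e_1 \wedge e_2,
	\]
so that by the defining equation for the signed area, $\mathrm{SA}(U,V) = a_1 b_2 - a_2 b_1$. Squaring gives $\mathrm{SA}(U,V)^2 = (a_1 b_2 - a_2 b_1)^2$. It then remains to verify the purely algebraic identity
	\[
		(a_1 b_2 - a_2 b_1)^2 = (a_1^2 + a_2^2)(b_1^2 + b_2^2) - (a_1 b_1 + a_2 b_2)^2,
	\]
which is the classical Lagrange (or Cauchy--Binet) identity in two dimensions and expands directly; matching this against the inner-product expressions above yields the claim.

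The one point requiring care, and the main thing to check rather than a genuine obstacle, is that the right-hand side is manifestly independent of the choice of orthonormal basis $\{e_1, e_2\}$ and of $F$, whereas $\mathrm{SA}(U,V)$ as defined depends a priori on that choice; however, only its square enters the statement, and since any two positively-oriented orthonormal bases of $W$ differ by a rotation of determinant $+1$, the quantity $\mathrm{SA}(U,V)^2$ is well-defined regardless. I would note that the definition's requirement that $\{e_1, e_2\}$ arise as $F f_i F^\star$ for $F \in \sltc$ fixes the orientation, so $\mathrm{SA}(U,V)$ itself is determined up to sign at worst, and its square is unambiguous. Thus the computation above is legitimate and the identity follows from the coordinate expressions together with the elementary Lagrange identity.
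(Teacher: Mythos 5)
Your proof is correct and follows essentially the same route as the paper's: expand $U$ and $V$ in the orthonormal basis $\{e_1,e_2\}$ supplied by the definition, read off $\mathrm{SA}(U,V)$ as the $2\times 2$ determinant of coefficients, and conclude via the two-dimensional Lagrange identity. Your additional remark on the well-definedness of $\mathrm{SA}(U,V)^2$ under change of admissible basis is a sensible observation that the paper leaves implicit, but it does not change the argument.
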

\begin{proof}
	Let $\{e_1, e_2\}$ be an orthonormal basis $W$ such that $F f_i F^{\star} = e_i$ for $i = 1,2$ with $F \in \sltc$.
	Writing
		\[
			U = a e_1 + b e_2, \quad V = c e_1 + d e_2
		\]
	for some constants $a,b,c,d \in \mathbb{R}$, we calculate that
		\[
			\mathrm{SA}(U,V) e_1 \wedge e_2 = (a e_1 + b e_2) \wedge (c e_1 + d e_2) = (ad - bc) e_1 \wedge e_2.
		\]
	On the other hand, we have
		\[
			\langle U, U \rangle \langle V, V \rangle - \langle U, V \rangle^2 = (a^2 + b^2)(c^2 + d^2) - (ac + bd)^2 = (ad - bc)^2,
		\]
	giving us the desired conclusion.
\end{proof}

The next lemmata give additional geometric criteria for the Bj\"{o}rling data to satisfy \eqref{eqn:weirdCond} in terms of the signed area.
\begin{lemma}
	Given Bj\"{o}rling data $\gamma$ and $\mathcal{L}$ satisfying the conformality condition, the signed area of $\dot\gamma$ and $\mathcal{L}$ does not vanish.
\end{lemma}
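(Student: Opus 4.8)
The plan is to combine Lemma~\ref{lemma:sA} with the conformality condition \eqref{Eq:202301221105PM}; the statement will follow almost immediately once the hypotheses of the lemma are verified. First I would check that $\dot\gamma$ and $\mathcal{L}$ meet the requirements of Lemma~\ref{lemma:sA}. By assumption $\mathcal{L}$ is spacelike, and since $\gamma$ is a spacelike curve, $\dot\gamma$ is spacelike as well. The conformality condition gives $\langle\dot\gamma,\mathcal{L}\rangle = 0$ together with $\langle\dot\gamma,\dot\gamma\rangle = \langle\mathcal{L},\mathcal{L}\rangle > 0$, so $\dot\gamma$ and $\mathcal{L}$ are nonzero orthogonal vectors of equal positive norm. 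In particular they are linearly independent, and any nontrivial combination $a\dot\gamma + b\mathcal{L}$ has norm $a^2\langle\dot\gamma,\dot\gamma\rangle + b^2\langle\mathcal{L},\mathcal{L}\rangle > 0$, so the plane they span is Riemannian and the signed area is well defined.

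With the hypotheses in place, I would apply Lemma~\ref{lemma:sA} directly to obtain
	\[
		\mathrm{SA}(\dot\gamma, \mathcal{L})^2 = \langle\dot\gamma,\dot\gamma\rangle\langle\mathcal{L},\mathcal{L}\rangle - \langle\dot\gamma,\mathcal{L}\rangle^2.
	\]
Substituting the conformality relations $\langle\dot\gamma,\mathcal{L}\rangle = 0$ and $\langle\mathcal{L},\mathcal{L}\rangle = \langle\dot\gamma,\dot\gamma\rangle$ collapses the right-hand side to $\langle\dot\gamma,\dot\gamma\rangle^2$. Since $\dot\gamma$ is spacelike we have $\langle\dot\gamma,\dot\gamma\rangle > 0$, and therefore
	\[
		\mathrm{SA}(\dot\gamma,\mathcal{L})^2 = \langle\dot\gamma,\dot\gamma\rangle^2 > 0,
	\]
which forces $\mathrm{SA}(\dot\gamma,\mathcal{L}) \neq 0$.

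There is no serious obstacle here: the only point demanding a little care is confirming that $\dot\gamma$ and $\mathcal{L}$ actually span a Riemannian plane, so that the signed area is defined and Lemma~\ref{lemma:sA} is applicable, but this is an immediate consequence of the orthogonality and equal-norm conditions. The real content of the statement is simply that the conformality condition forces the squared signed area to equal $\langle\dot\gamma,\dot\gamma\rangle^2$, which is manifestly positive for a spacelike curve. I would expect this lemma to then feed into relating the non-vanishing of \eqref{eqn:weirdCond} to the non-vanishing of the signed area in the subsequent argument.
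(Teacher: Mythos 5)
Your proof is correct, but it takes a different route from the paper's. The paper argues by contradiction: assuming $\mathrm{SA}(\dot\gamma,\mathcal{L})=0$, it deduces (from the decomposition of tangent vectors at $\gamma$ into a $\gamma$-component plus a component in a spacelike plane, together with the equal-norm condition) that $\mathcal{L}=\alpha\gamma\pm\dot\gamma$ for some $\alpha$, and then the orthogonality $\langle\dot\gamma,\mathcal{L}\rangle=0$ forces $\langle\dot\gamma,\dot\gamma\rangle=0$, contradicting spacelikeness. You instead apply Lemma~\ref{lemma:sA} directly and compute $\mathrm{SA}(\dot\gamma,\mathcal{L})^2=\langle\dot\gamma,\dot\gamma\rangle\langle\mathcal{L},\mathcal{L}\rangle-\langle\dot\gamma,\mathcal{L}\rangle^2=\langle\dot\gamma,\dot\gamma\rangle^4{}^{1/2}$ --- more precisely $\langle\dot\gamma,\dot\gamma\rangle^2>0$. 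This is exactly the computation the paper itself performs later, inside the proof of Lemma~\ref{lemma:orientation}, so your argument is consistent with the paper's toolkit and is arguably cleaner here; it also has the merit that you explicitly verify the hypotheses of Lemma~\ref{lemma:sA} (linear independence and that $\operatorname{span}\{\dot\gamma,\mathcal{L}\}$ is Riemannian, via $\abs{a\dot\gamma+b\mathcal{L}}^2=(a^2+b^2)\abs{\dot\gamma}^2>0$), a point the paper leaves implicit. What the paper's contradiction argument buys in exchange is the explicit structural description $\mathcal{L}=\alpha\gamma\pm\dot\gamma$ of the degenerate configuration, which is not needed for this lemma but mirrors the normal-form computations used in the orientability discussion that follows.
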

\begin{proof}
	Suppose for contradiction that $\mathrm{SA}(\dot{\gamma}, \mcL)=0$.
	Then direct calculations show that $\mathcal{\mcL} = \alpha \gamma \pm \dot{\gamma}$ for some $\alpha$.
	Thus
		\[
			\pm \langle \dot{\gamma}, \dot{\gamma} \rangle = \langle \dot{\gamma}, \alpha \gamma \pm \dot{\gamma} \rangle =  \langle  \dot{\gamma}, \mathcal{L} \rangle =0,
		\]
which is a contradiction since $\dot{\gamma}$ is spacelike.
\end{proof}

\begin{lemma}\label{lemma:orientation}
	Given Bj\"{o}rling data $\gamma$ and $\mathcal{L}$ satisfying the conformality condition, $G$ and $\Omega$ are well-defined if and only if the signed area of $\dot{\gamma}$ and $\mathcal{L}$ is negative.
\end{lemma}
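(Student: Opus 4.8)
The plan is to identify the condition \eqref{eqn:weirdCond} — which by the formulas \eqref{Eq:202301220834PM} is exactly the non-vanishing of the denominator defining $\tilde\Omega$, hence what makes $G$ and $\Omega$ well-defined — with the negativity of $\mathrm{SA}(\dot\gamma,\mathcal{L})$. First I would fix the magnitude of the signed area: substituting the conformality condition \eqref{Eq:202301221105PM} into Lemma~\ref{lemma:sA} gives
\[
\mathrm{SA}(\dot\gamma,\mathcal{L})^2 = \langle\dot\gamma,\dot\gamma\rangle\langle\mathcal{L},\mathcal{L}\rangle - \langle\dot\gamma,\mathcal{L}\rangle^2 = \langle\dot\gamma,\dot\gamma\rangle^2,
\]
so $\mathrm{SA}(\dot\gamma,\mathcal{L}) = \pm\langle\dot\gamma,\dot\gamma\rangle$ with $\langle\dot\gamma,\dot\gamma\rangle > 0$; the entire content is then to match the undetermined sign with the vanishing or non-vanishing of $D := \gamma_{11}\Lambda_{21} - \gamma_{21}\Lambda_{11}$.

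The next step is to reduce to a normalized frame. Writing $D = -\det\!\begin{pmatrix}\Lambda_{11} & \gamma_{11}\\ \Lambda_{21} & \gamma_{21}\end{pmatrix}$, I would use that both $\gamma$ and $\Lambda$ have rank at most one — the former since $\det\gamma = 0$, the latter since $\det\Lambda = 0$ by \eqref{Eq:202301221105PM} — so that $D = 0$ is equivalent to the linear dependence of the first columns of $\Lambda$ and $\gamma$, a condition transported by the $\sltc$-action (under $X\mapsto AXA^\star$ both column spaces are multiplied by $A$). Hence the vanishing of $D$ is preserved; and since this action is an orientation-preserving isometry of $\qthreep$, the signed area is preserved as well. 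Fixing a point $u_0$ and choosing $A$ with $A\gamma(u_0)A^\star = f_3$, it therefore suffices to prove the equivalence at $u_0$ under the normalization $\gamma = f_3$.

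With $\gamma = f_3$, orthogonality to $\gamma$ — which holds for $\dot\gamma$ because $\langle\gamma,\gamma\rangle$ is constant and for $\mathcal{L}$ by \eqref{Eq:202301221105PM} — forces the $(2,2)$-entries of $\dot\gamma$ and $\mathcal{L}$ to vanish, so I may write $\dot\gamma = c_1 f_1 + c_2 f_2 + a f_3$ and $\mathcal{L} = g_1 f_1 + g_2 f_2 + e f_3$. The remaining conformality relations become $c_1^2 + c_2^2 = g_1^2 + g_2^2$ and $c_1 g_1 + c_2 g_2 = 0$, which force $(g_1,g_2) = \pm(-c_2,c_1)$; and a direct evaluation of $D = \Lambda_{21} = \tfrac12(\dot\gamma_{21} - i\mathcal{L}_{21})$ shows that $D$ vanishes in the $+$ case and equals $c_1 - i c_2 \neq 0$ in the $-$ case.

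The remaining, and main, difficulty is to pin down the actual sign of $\mathrm{SA}(\dot\gamma,\mathcal{L})$ in each case: the tangent plane $W = \Span\{\dot\gamma,\mathcal{L}\}$ is tilted away from the horizontal plane $\Span\{f_1,f_2\}$ by the null terms $a f_3$ and $e f_3$, so one cannot read $\mathrm{SA}$ off from $\dot\gamma\wedge\mathcal{L}$ directly. I would handle this by a continuity argument: letting $a, e \to 0$ leaves the inner products $\langle\dot\gamma,\dot\gamma\rangle$, $\langle\dot\gamma,\mathcal{L}\rangle$, $\langle\mathcal{L},\mathcal{L}\rangle$ unchanged (since $f_3$ is null and orthogonal to $f_1, f_2$), so throughout the deformation $\dot\gamma,\mathcal{L}$ stay a linearly independent spacelike pair spanning a Riemannian plane and $\mathrm{SA} = \pm(c_1 g_2 - c_2 g_1)$ retains a constant sign. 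At the endpoint $a = e = 0$ one has $W = \Span\{f_1, f_2\}$ with the orthonormal $\sltc$-frame $\{f_1, f_2\}$ (taking $F = \mathrm{Id}$), so $\dot\gamma\wedge\mathcal{L} = (c_1 g_2 - c_2 g_1)\, f_1\wedge f_2$ yields $\mathrm{SA}(\dot\gamma,\mathcal{L}) = c_1 g_2 - c_2 g_1$. Since $c_1 g_2 - c_2 g_1$ equals $+(c_1^2 + c_2^2) > 0$ in the $+$ case and $-(c_1^2 + c_2^2) < 0$ in the $-$ case, this identifies $\mathrm{SA}(\dot\gamma,\mathcal{L}) < 0$ precisely with the $-$ case, that is, with $D \neq 0$, which completes the proof.
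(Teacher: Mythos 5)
Your proposal is correct and takes essentially the same route as the paper's proof: normalize by an orientation-preserving isometry so that $\gamma(u_0)$ becomes a standard null point, expand $\dot{\gamma}$ and $\mathcal{L}$ in the frame $\{\gamma, f_1, f_2\}$, and compare the resulting expressions for $\gamma_{11}\Lambda_{21}-\gamma_{21}\Lambda_{11}$ with the signed area computed via Lemma~\ref{lemma:sA}. The only real differences are expository --- you solve the conformality constraints explicitly to get $(g_1,g_2)=\pm(-c_2,c_1)$ where the paper keeps general coefficients and computes $|\gamma_{11}\Lambda_{21}-\gamma_{21}\Lambda_{11}|^2$, and you supply justifications (the column-space argument for the invariance of the vanishing of \eqref{eqn:weirdCond}, and the deformation argument showing the $f_3$-components do not change the sign of $\mathrm{SA}$) for two steps that the paper asserts without proof.
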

\begin{proof}
	Fix any $u_0 \in I$ throughout the proof.
	Since signed area is invariant under orientation-preserving isometries, we may assume without loss of generality that $\gamma = 2 f_3$, so that any vector in $T_{\gamma} \qthreep$ takes the form
		\[
			\ell \gamma + x f_1 + y f_2
		\]
	for some $\ell, x, y \in \mathbb{R}$.
	By the conformality conditions \eqref{Eq:202301221105PM} on the Bj\"{o}rling data, we have that $\dot{\gamma}, \mathcal{L} \in T_{\gamma} \qthreep$; therefore, there are some constants $a,b,c,d,e,f \in \mathbb{R}$ such that
		\[
			\dot{\gamma} = a \gamma + b f_1 + c f_2, \quad\text{and}\quad
			\mathcal{L} = d \gamma +  e f_1 + f f_2.
		\]
	Since $\Lambda := \frac{1}{2} \left(  \dot{\gamma} - i \mathcal{L}\right)$, we calculate that
		\begin{align*}
			\gamma_{11} \Lambda_{21}  - \gamma_{21} \Lambda_{11} = (b - f) - i (c + e),
		\end{align*}
	so that
		\begin{equation}\label{Eq:202302230818PM}
			\begin{aligned}
				|\gamma_{11} \Lambda_{21}  - \gamma_{21} \Lambda_{11}|^2 
					&= b^2 + c^2 + e^2 + f^2 + 2 (c e  - b f)   \\
					&= |\dot{\gamma}|^2 + |\mathcal{L}|^2 + 2 (c e  - b f) \\
					&= 2 (|\dot{\gamma}|^2 - (b f - ce)).
			\end{aligned}
		\end{equation}
	
	Now since $\gamma$-component does not affect the signed area, we note that
		\[
			\mathrm{SA}(\dot{\gamma}, \mathcal{L}) = \mathrm{SA}(b f_1 + c f_2, e f_1 + f f_2).
		\]
	However, we can also calculate that
		\[
			\mathrm{SA}(b f_1 + c f_2, e f_1 + f f_2) f_1 \wedge f_2
				= (b f_1 + c f_2) \wedge (e f_1 + f f_2)
				= (bf - ce) f_1 \wedge f_2,
		\]
	allowing us to deduce that
		\[
			\mathrm{SA}(\dot{\gamma}, \mathcal{L}) = bf - ce.
		\]
	On the other hand, Lemma~\ref{lemma:sA} and the conformality conditions \eqref{Eq:202301221105PM} imply that
		\[
			\mathrm{SA}(\dot{\gamma}, \mathcal{L})^2 = |\dot\gamma|^2 |\mathcal{L}|^2 - \langle \dot\gamma, \mathcal{L} \rangle^2 = | \dot\gamma|^4
 		\]
	so that
		\[
			bf - ce = \mathrm{SA}(\dot{\gamma}, \mathcal{L}) = 
				\begin{cases}
					|\dot\gamma|^2,& \text{if } \mathrm{SA}(\dot{\gamma}, \mathcal{L})>0, \\
					-|\dot\gamma|^2,& \text{if } \mathrm{SA}(\dot{\gamma}, \mathcal{L})<0.
				\end{cases}
		\]
	Thus, using \eqref{Eq:202302230818PM} allows us to conclude that
		\[
			|\gamma_{11} \Lambda_{21}  - \gamma_{21} \Lambda_{11}|^2  = 
				\begin{cases} 
					0,& \text{if } \mathrm{SA}(\dot{\gamma}, \mathcal{L})>0, \\
					4|\dot{\gamma}|^2,& \text{if } \mathrm{SA}(\dot{\gamma}, \mathcal{L})<0.
				\end{cases}
		\]
	Therefore, $G$ and $\Omega$ are well-defined if and only if $SA(\dot{\gamma}, \mcL)<0$.
\end{proof}

\begin{remark}
	The signed area condition of Lemma~\ref{lemma:orientation} encodes the fact that the orientation of $\dot{\gamma}$ and $\mathcal{L}$ must be correct.
	For this reason, we call the signed area condition, the \emph{orientability condition}.
\end{remark}

\begin{remark}
	The orientability condition is unnecessary for the Bj\"orling data in other quadrics such as $\hthree$ and $\dsthree$; the crucial difference arises in the definition of the (lightlike) Gauss map used to define the second fundamental form.
	In the case of $\hthree$ and $\dsthree$, the Gauss map $n$ and the position vector $X$ must satisfy $\langle n, X \rangle = 0$; however, in the case $\qthreep$, we have that $\langle n, X \rangle = 1$, so that one cannot switch the signs on the Gauss map freely.
\end{remark}

We summarize our results in the next main theorem of our paper, the \emph{Bj\"{o}rling representation for zero mean curvature surfaces in the $3$-dimensional light cone}:
\begin{theorem}\label{thm:main}
	Consider $\qthreep$ as a subset of $\hermtwo$ identified with $\lfour$.
	Given a spacelike analytic curve $\gamma : I  \to \qthreep$ and an analytic vector field $\mathcal{L}$ satisfying the conformality condition
	\[
		\langle \dot{\gamma}, \dot{\gamma} \rangle = \langle \mathcal{L}, \mathcal{L} \rangle, \quad
		\langle \dot{\gamma}, \mathcal{L} \rangle = 0, \quad
		\langle \gamma, \mathcal{L} \rangle = 0,
	\]
	and the orientability condition
	\[
		\mathrm{SA}( \dot{\gamma}(u), \mcL(u)) <0
		\qquad\text{for all }
		u \in I,
	\]
	there exists a unique zero mean curvature surface $X$ such that
	\[
		X(u,0) = \gamma(u), \qquad X_v(u,0) = \mathcal{L}(u)
	\]
	for all $u \in I$.
	It is given by $X = F f_3 F^\star$ where $F \in C^\omega(\mathcal{U} \subset \mathbb{C}, \sltc)$ is a solution to 
	\[
		\dif{F} F^{-1} = \text{ the analytic extension of } \bfOmega,
		\qquad
		F(u_0) f_3 F(u_0)^\star = \gamma(u_0)
	\]
	where $u_0$ is an element of the domain of $\gamma$ and $\bfOmega =: \tilde\Omega \dif{u}$ is the solution of
	\[
		\Lambda = \tilde\Omega \gamma,
		\qquad\text{where}\quad
		\Lambda(u) := \frac{1}{2} (\dot{\gamma}(u) - i \mathcal{L}(u))  \in M(2,\mathbb{C}).
	\]
\end{theorem}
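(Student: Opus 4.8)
The plan is to assemble the pieces already established and supply the three analytic ingredients that remain: the extension of $\bfOmega$ off the real axis, the integration of the resulting linear ODE, and the verification that the surface so produced actually realizes the prescribed Bj\"orling data. The orientability hypothesis $\mathrm{SA}(\dot\gamma,\mcL)<0$ guarantees, via Lemma~\ref{lemma:orientation}, that the denominator in \eqref{eqn:weirdCond} never vanishes, so the computation preceding the theorem already furnishes a \emph{unique} real-analytic solution of $\Lambda=\tilde\Omega\gamma$ through the formulas \eqref{Eq:202301220834PM}--\eqref{Eq:202301220835PM}. Since $\gamma$ and $\mcL$ are real analytic, so are $\Lambda$ and the coefficient of $\bfOmega$; the identity theorem then extends them uniquely to holomorphic functions on some simply connected neighborhood $\mathcal{U}\subset\mathbb{C}$ of $I$, and I would define $\bfOmega$ there by the same matrix formula. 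Writing $\bfOmega=A\,\dif z$ for the holomorphic matrix-valued coefficient $A$, one checks directly that $A$ stays finite along all of $I$ precisely because \eqref{eqn:weirdCond} holds, even at points where $G$ itself has a pole.

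Next I would integrate. The equation $\dif F=\bfOmega F$ is a linear holomorphic ODE on the simply connected domain $\mathcal{U}$, so for the initial value $F(u_0)=F_0$ --- any $F_0\in\sltc$ with $F_0 f_3 F_0^\star=\gamma(u_0)$, which exists since $\gamma(u_0)\in\qthreep$ --- it has a unique holomorphic solution $F:\mathcal{U}\to\mathrm{M}(2,\mathbb{C})$. Because $\operatorname{tr}A=0$, Liouville's formula gives $\frac{\dif}{\dif z}\det F=(\operatorname{tr}A)\det F=0$, so $\det F\equiv\det F_0=1$ and $F$ takes values in $\sltc$. Setting $X:=F f_3 F^\star$ then yields, by Fact~\ref{fact:Wrep} together with the converse direction of the cited Weierstrass-type representation, a zero mean curvature surface; moreover along $I$ one computes $\langle X_z,X_z\rangle=-\det\Lambda=0$ and $\langle X_z,X_{\bar z}\rangle=\tfrac12|\dot\gamma|^2>0$ using the conformality conditions \eqref{Eq:202301221105PM}, so $X$ is a genuine spacelike conformal immersion on a neighborhood of $I$ inside $\mathcal{U}$.

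The crux is to confirm the boundary behavior, which I would handle by an ODE-uniqueness comparison rather than by direct integration. Restricting to $v=0$ and setting $\beta(u):=X(u,0)$, differentiation combined with $F'=AF$ and with $(F^\star)'=(F')^\star$ (valid since $F$ is holomorphic, so $F^\star$ depends only on $\bar z$) gives the linear first-order system $\dot\beta=A\beta+\beta A^\star$ in $\hermtwo$. On the other hand the defining relation $A\gamma=\Lambda=\tfrac12(\dot\gamma-i\mcL)$, together with its conjugate transpose $\gamma A^\star=\Lambda^\star=\tfrac12(\dot\gamma+i\mcL)$ --- using that $\gamma,\dot\gamma,\mcL$ are Hermitian --- shows $A\gamma+\gamma A^\star=\dot\gamma$, so $\gamma$ solves the \emph{same} system with the \emph{same} initial value $\beta(u_0)=\gamma(u_0)$. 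Uniqueness of solutions to linear ODEs forces $\beta\equiv\gamma$ on $I$. Finally, comparing $X_z(u,0)=A(u)\beta(u)=\Lambda(u)$ with $X_z(u,0)=\tfrac12(\dot\beta(u)-iX_v(u,0))$ and invoking $\beta=\gamma$ yields $X_v(u,0)=\mcL(u)$, so both halves of the Bj\"orling data are attained.

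For uniqueness of the surface, suppose $\hat X$ is any zero mean curvature surface with the same data. By Fact~\ref{fact:Wrep} its Weierstrass data satisfy $\Lambda=\tilde\Omega\gamma$ along $I$, so the already-established uniqueness of \eqref{Eq:202301220834PM}--\eqref{Eq:202301220835PM} forces them to coincide with $(G,\Omega)$ on $I$ and hence, being holomorphic, on all of $\mathcal{U}$. The associated frame therefore solves the same ODE and, after matching the initial position, differs from $F$ only by right multiplication by a constant $U$ with $U f_3 U^\star=f_3$, which leaves $F f_3 F^\star$ unchanged; thus $\hat X=X$. The one genuinely delicate step, and the one I would write out most carefully, is the boundary verification: it rests on the Hermitian symmetry $X_{\bar z}=(X_z)^\star$, on matching the frame's initial value to $\gamma(u_0)$ only up to the stabilizer of $f_3$, and on the orientability condition, which is silently used throughout to keep $A$ finite along $I$.
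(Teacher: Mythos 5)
Your proposal is correct and follows essentially the same route as the paper: the heart of the matter is the unique solvability of $\Lambda = \tilde\Omega\gamma$ under the orientability condition (via Lemma~\ref{lemma:orientation} and the computations \eqref{Eq:202301220834PM}--\eqref{Eq:202301220835PM}), followed by analytic extension and integration of the frame equation. The paper states the theorem as a summary of that derivation and leaves the remaining steps implicit, whereas you supply them explicitly --- in particular the verification that $X(u,0)=\gamma(u)$ and $X_v(u,0)=\mathcal{L}(u)$ via the comparison of $\dot\beta = A\beta + \beta A^\star$ with $A\gamma + \gamma A^\star = \dot\gamma$, and the uniqueness of the surface up to the stabilizer of $f_3$ --- all of which is sound.
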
	
	
\section{Rotational zero mean curvature surfaces in \texorpdfstring{$\qthreep$}{Q3+}}\label{Sec:202302260929PM}
In this section we construct and classify rotationally invariant zero mean curvature surfaces, which we call \emph{catenoids} of the $3$-dimensional light cone.
The Bj\"{o}rling problem is especially apt for the construction of such surface as the initial curve can be selected as the orbit of a single point under rotations of $\qthreep$.

First, we define rotational surfaces in $\qthreep$ analogously to the definition given in hyperbolic spaces \cite[Definition~2.2]{do_carmo_rotation_1983} (see also \cite[Definition~3.1]{abe_constant_2018}):
\begin{definition}
	Let $P^k$ denote a $k$-dimensional subspace of $\lfour$.
	Choosing some $P^2$ and $P^3 \supset P^2$ such that $P^3 \cap \qthreep \neq \varnothing$, let $\mathrm{O}(P^2)$ denote the set of orthogonal transformations that leave $P^2$ fixed. 
	For a regular curve $\gamma$ in $P^3 \cap \qthreep$ that does not meet $P^2$, we call the orbit of $\gamma$ under the action of  $\mathrm{O}(P^2)$ a \emph{rotational surface generated by $\gamma$}.
	Furthermore, we say that the rotational surface is
		\begin{itemize}
			\item an \emph{elliptic} rotational surface if the induced metric on $P^2$ is Lorentzian,
			\item a \emph{parabolic} rotational surface if the induced metric on $P^2$ is degenerate, and
			\item a \emph{hyperbolic} rotational surface if the induced metric on $P^2$ is Riemannian.
		\end{itemize}
\end{definition}

The following facts will be useful in recovering the explicit parametrizations of the catenoids from the rotationally invariant Bj\"{o}rling data:
\begin{fact}[{\cite[Lemma~4.2]{yang_bjorling_2017}}]\label{fact:sol1}
	For $\nu \in \mathbb{C}\setminus\{0\}$,
		\[
			F_0(z) := 
				\frac{1}{2}\begin{pmatrix} z^{1/2} & 0 \\ 0 & z^{-1/2} \end{pmatrix}
				\begin{pmatrix}  \sqrt{\nu}+\sqrt{\nu}^{-1} & \sqrt{\nu}-\sqrt{\nu}^{-1} \\ \sqrt{\nu}-\sqrt{\nu}^{-1} & \sqrt{\nu}+\sqrt{\nu}^{-1} \end{pmatrix}
				\begin{pmatrix} z^{-\nu/2} & 0 \\ 0 & z^{\nu/2} \end{pmatrix}
		\]
	is a solution to
		\begin{equation}\label{Eq:202211170549AM}
			\dif{F}F^{-1} = \begin{pmatrix} G & -G^2 \\ 1 & -G \end{pmatrix} \Omega  
			\quad\text{with}\quad 
			G := z,   \quad \Omega := \lambda \frac{\dif{z}}{z^2}, \quad \lambda :=\frac{1-\nu^2}{4}.
		\end{equation}
	where $\lambda \in \mathbb{C}\setminus\{\frac{1}{4} \}$.
	Furthermore, $\tilde{F} := F_0 R$ also solves \eqref{Eq:202211170549AM} where $R \in \sltc$ controls the initial condition.
\end{fact}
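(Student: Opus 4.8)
The plan is to verify the statement by a direct computation of the right logarithmic derivative $\dif{F_0}\,F_0^{-1}$, which is exactly the quantity that \eqref{Eq:202211170549AM} prescribes. First I would observe that the scalar prefactor $\tfrac12$ plays no role: for a constant $c$ and any matrix-valued $A(z)$ one has $\dif{(cA)}\,(cA)^{-1} = \dif{A}\,A^{-1}$. Hence, writing $A := DME$ with
\[
	D = \begin{pmatrix} z^{1/2} & 0 \\ 0 & z^{-1/2}\end{pmatrix}, \quad
	M = \begin{pmatrix} \sqrt{\nu}+\sqrt{\nu}^{-1} & \sqrt{\nu}-\sqrt{\nu}^{-1} \\ \sqrt{\nu}-\sqrt{\nu}^{-1} & \sqrt{\nu}+\sqrt{\nu}^{-1}\end{pmatrix}, \quad
	E = \begin{pmatrix} z^{-\nu/2} & 0 \\ 0 & z^{\nu/2}\end{pmatrix},
\]
it suffices to compute $\dif{A}\,A^{-1}$; note that $M$ (and hence $A$) is defined precisely because $\nu \neq 0$. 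Since $M$ is constant, the product rule collapses to
\[
	\dif{A}\,A^{-1} = \dif{D}\,D^{-1} + D\,M\,\bigl(\dif{E}\,E^{-1}\bigr)\,M^{-1}\,D^{-1},
\]
reducing the problem to two diagonal logarithmic derivatives and a single conjugation.

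Next I would carry out these pieces. With $\sigma_3 := \left(\begin{smallmatrix} 1 & 0 \\ 0 & -1\end{smallmatrix}\right)$, one immediately gets $\dif{D}\,D^{-1} = \tfrac{1}{2z}\sigma_3\,\dif{z}$ and $\dif{E}\,E^{-1} = -\tfrac{\nu}{2z}\sigma_3\,\dif{z}$. The heart of the argument---and the step carrying all the computational weight---is the conjugation $M\sigma_3 M^{-1}$. Using $\det M = 4$ and the relation $\sqrt{\nu}\cdot\sqrt{\nu}^{-1} = 1$, the explicit inverse of $M$ yields
\[
	M\sigma_3 M^{-1} = \frac12\begin{pmatrix} \nu + \nu^{-1} & -(\nu - \nu^{-1}) \\ \nu - \nu^{-1} & -(\nu + \nu^{-1})\end{pmatrix}.
\]
Conjugating further by the diagonal $D$ only rescales the off-diagonal entries by $z^{\pm 1}$, and substituting everything back gives
\[
	\dif{A}\,A^{-1} = \frac{1}{2z}\bigl(\sigma_3 - \nu\,D\,M\sigma_3 M^{-1}\,D^{-1}\bigr)\dif{z} = \frac{1-\nu^2}{4z}\begin{pmatrix} 1 & -z \\ z^{-1} & -1\end{pmatrix}\dif{z}.
\]
I would then recognize the right-hand side as $\left(\begin{smallmatrix} z & -z^2 \\ 1 & -z\end{smallmatrix}\right)\lambda\,z^{-2}\,\dif{z}$ with $\lambda = \tfrac{1-\nu^2}{4}$, which is precisely $\left(\begin{smallmatrix} G & -G^2 \\ 1 & -G\end{smallmatrix}\right)\Omega$ for $G = z$ and $\Omega = \lambda\,\dif{z}/z^2$, completing the verification of \eqref{Eq:202211170549AM}. (Consistently, $\lambda = \tfrac14$ would force $\nu = 0$, the excluded case.)

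For the remaining assertions, I would first note $\det F_0 = \bigl(\tfrac12\bigr)^2\det D\,\det M\,\det E = \tfrac14\cdot 1\cdot 4\cdot 1 = 1$, so $F_0 \in \sltc$. The equation \eqref{Eq:202211170549AM} constrains only the right-invariant form $\dif{F}\,F^{-1}$, which is unchanged under right multiplication by a constant: if $\dif{R} = 0$ then $\dif{(F_0 R)}\,(F_0 R)^{-1} = \dif{F_0}\,R R^{-1} F_0^{-1} = \dif{F_0}\,F_0^{-1}$. Thus $\tilde F = F_0 R$ solves the same equation for every constant $R$, and taking $R \in \sltc$ keeps $\tilde F \in \sltc$ while allowing any prescribed initial value via $R = F_0(z_0)^{-1}\tilde F(z_0)$. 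The only subtlety is the choice of branches for $z^{1/2}$ and $z^{\pm\nu/2}$; these affect only the multivalued primitive $F_0$ and not the locally defined one-form $\dif{F_0}\,F_0^{-1}$, so they do not interfere with the verification.
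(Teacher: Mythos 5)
Your verification is correct, but note that the paper itself offers no proof of this statement: it is imported verbatim as a \emph{Fact} with a citation to \cite{yang_bjorling_2017}*{Lemma~4.2}, so there is no internal argument to compare against. Your direct computation is a sound, self-contained substitute. The decomposition $F_0 = \tfrac12 DME$ with $M$ constant correctly collapses the logarithmic derivative to $\dif{D}\,D^{-1} + DM(\dif{E}\,E^{-1})M^{-1}D^{-1}$; the pieces $\dif{D}\,D^{-1} = \tfrac{1}{2z}\sigma_3\,\dif{z}$ and $\dif{E}\,E^{-1} = -\tfrac{\nu}{2z}\sigma_3\,\dif{z}$ are right; and with $a=\sqrt{\nu}+\sqrt{\nu}^{-1}$, $b=\sqrt{\nu}-\sqrt{\nu}^{-1}$ one has $a^2+b^2 = 2(\nu+\nu^{-1})$, $ab = \nu-\nu^{-1}$, $\det M = 4$, which reproduces exactly your formula for $M\sigma_3 M^{-1}$ and hence
\[
	\dif{F_0}\,F_0^{-1} = \frac{1-\nu^2}{4z}\begin{pmatrix} 1 & -z \\ z^{-1} & -1\end{pmatrix}\dif{z}
	= \begin{pmatrix} z & -z^2 \\ 1 & -z\end{pmatrix}\frac{\lambda\,\dif{z}}{z^2},
	\qquad \lambda = \frac{1-\nu^2}{4}.
\]
Your handling of the scalar prefactor, of the constant right factor $R$ (right-invariance of $\dif{F}\,F^{-1}$), of $\det F_0 = 1$, and of the branch ambiguity in $z^{1/2}$, $z^{\pm\nu/2}$ (which alters $F_0$ only by signs or constant right diagonal factors, hence not the one-form) are all correct. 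What your approach buys is independence from the cited source, at the cost of a computation the authors chose to outsource; it would be worth also remarking, as the original lemma does, that $\nu \neq 0$ is exactly what makes $M$ invertible and $\lambda \neq \tfrac14$.
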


\begin{fact}[{\cite[Lemma~4.2]{yang_bjorling_2017}}]\label{fact:sol2}
	For $\beta \in \mathbb{C}\setminus\{0\}$, let 
		\[
			F_1(z) := \begin{pmatrix} 1 & z \\ 0 & 1 \end{pmatrix}
				\begin{pmatrix} \beta^{-1/2} & 0 \\ 0 & \beta^{1/2} \end{pmatrix}
				\begin{pmatrix} \cos{\beta z} & -   \sin{\beta z}\\
				 \sin{\beta z} & \cos{\beta z} \end{pmatrix}
				\begin{pmatrix} \beta^{1/2} & 0 \\ 0 & \beta^{-1/2} \end{pmatrix}
		\]
	is a solution to
		\begin{equation}\label{Eq:202301251205AM2}
			 \dif{F} F^{-1} = \begin{pmatrix} G & -G^2 \\ 1 & -G \end{pmatrix} \Omega\quad\text{with}\quad 
			G := z,   \quad \Omega := \beta^2 \dif{z}.
		\end{equation}
	Furthermore, $\tilde{F} := F_1 R$ also solves \eqref{Eq:202301251205AM2} where $R \in \sltc$ controls the initial condition.
\end{fact}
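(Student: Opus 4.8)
The statement is a direct verification that the explicitly exhibited $F_1$ satisfies the prescribed equation, so the plan is computational rather than conceptual. First I would confirm that $F_1$ actually takes values in $\sltc$. Since $F_1$ is a product of four factors, it suffices to check that each has determinant $1$: the two diagonal factors each contribute determinant $1$, the unipotent factor $\begin{pmatrix} 1 & z \\ 0 & 1 \end{pmatrix}$ has determinant $1$, and the rotation-type factor has determinant $\cos{\beta z}^2 + \sin{\beta z}^2 = 1$. Hence $\det F_1 \equiv 1$.

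Next I would compute the right logarithmic derivative $\dif{F_1} F_1^{-1}$. Writing $F_1 = A B C D$ with $A := \begin{pmatrix} 1 & z \\ 0 & 1 \end{pmatrix}$, with $C$ the rotation factor, and with $B := \begin{pmatrix} \beta^{-1/2} & 0 \\ 0 & \beta^{1/2} \end{pmatrix}$ and $D$ the constant diagonal factors, the product rule together with the cancellation of the trailing constant $D$ against $D^{-1}$ in $F_1^{-1}$ yields
\[
	\dif{F_1}\, F_1^{-1} = \dif{A}\, A^{-1} + A B \left( \dif{C}\, C^{-1} \right) B^{-1} A^{-1}.
\]
The two ingredients are elementary: $\dif{A}\, A^{-1} = \begin{pmatrix} 0 & 1 \\ 0 & 0 \end{pmatrix} \dif{z}$, and since $C$ is a rotation by the angle $\beta z$, its logarithmic derivative is the constant skew generator $\dif{C}\, C^{-1} = \beta \begin{pmatrix} 0 & -1 \\ 1 & 0 \end{pmatrix} \dif{z}$.

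The remaining work is bookkeeping with two successive conjugations. Conjugating the skew generator by the diagonal $B$ rescales the off-diagonal entries, so that the $\beta$ from $\dif{C}\,C^{-1}$ combines with the $\beta^{\pm 1/2}$ factors to produce $\begin{pmatrix} 0 & -1 \\ \beta^2 & 0 \end{pmatrix}\dif{z}$; conjugating this in turn by the unipotent $A$ introduces the $z$-dependence, and upon adding $\dif{A}\, A^{-1}$ the two nilpotent contributions cancel in the upper-right entry, leaving $\beta^2 \begin{pmatrix} z & -z^2 \\ 1 & -z \end{pmatrix} \dif{z}$. This is precisely $\begin{pmatrix} G & -G^2 \\ 1 & -G \end{pmatrix}\Omega$ with $G = z$ and $\Omega = \beta^2 \dif{z}$, which establishes that $F_1$ solves \eqref{Eq:202301251205AM2}.

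Finally, for the ``furthermore'' assertion I would note that right multiplication by a constant $R \in \sltc$ leaves the right logarithmic derivative unchanged, since $\dif{(F_1 R)}\,(F_1 R)^{-1} = \dif{F_1}\, R R^{-1} F_1^{-1} = \dif{F_1}\, F_1^{-1}$, while $\det(F_1 R) = \det F_1 \det R = 1$ keeps $\tilde{F}$ in $\sltc$; the factor $R$ merely shifts the value $F_1(z_0) \mapsto F_1(z_0) R$ and so adjusts the initial condition without affecting the equation. No step presents a genuine obstacle here. The only point demanding care is tracking the two conjugations so that the $\beta^{\pm 1/2}$ entries combine correctly into $\beta^2$ and the nilpotent terms cancel in the $(1,2)$ entry.
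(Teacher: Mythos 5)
Your verification is correct: the decomposition $F_1 = ABCD$ with constant diagonal factors, the computation $\dif{F_1}\,F_1^{-1} = \dif{A}\,A^{-1} + AB(\dif{C}\,C^{-1})B^{-1}A^{-1}$, and the two conjugations all check out, yielding $\beta^2\begin{pmatrix} z & -z^2 \\ 1 & -z\end{pmatrix}\dif{z}$ as claimed. Note that the paper itself gives no proof of this statement --- it is quoted as a Fact with a citation to \cite{yang_bjorling_2017}*{Lemma 4.2} --- so there is no in-paper argument to compare against; your direct computation is a complete and appropriate substitute.
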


\subsection{Elliptic catenoids}
Consider an elliptic rotation
	\[
		R^\mathrm{E}(u) = \begin{pmatrix} e^{iu} & 0 \\ 0 & e^{-iu} \end{pmatrix}
	\]
applied to the point $\begin{psmallmatrix} 1 & 1 \\ 1 & 1 \end{psmallmatrix} \in \qthreep$ to obtain an elliptic circle
	\[
		\gamma(u) = 
			R^\mathrm{E} (u)
			\begin{pmatrix} 1 & 1 \\ 1 & 1 \end{pmatrix}
			R^\mathrm{E} (u)^\star
			= \begin{pmatrix} 1 & e^{2iu} \\ e^{-2iu} & 1 \end{pmatrix}.
	\]
Note that $\langle \dot{\gamma}, \dot{\gamma} \rangle = 4$ and that $\gamma(0) = \begin{psmallmatrix} 1 & 0 \\ 1 & 1 \end{psmallmatrix} f_3 \begin{psmallmatrix} 1 & 0 \\ 1 & 1 \end{psmallmatrix}^\star$.

To complete the Bj\"{o}rling data, we first note that any $\mathcal{L}$ satisfying the conformality condition must be of the form
	\[
		\mathcal{L}_\pm(u) = f(u) \gamma(u) \pm 2 e_3
	\]
for any function $f$.
Now, we can check that $\mathcal{L}_+ = f \gamma + 2 e_3$ fails the orientability condition, that is,
	\[
		\Lambda_{21} \gamma_{11} - \Lambda_{11} \gamma_{21} = 0, 
	\]
while $\mathcal{L}_- = f \gamma - 2 e_3$ satisfies the orientability condition.

Thus we take $\mathcal{L} = \mathcal{L}_-$, and note that for rotationally invariant surfaces, we must have that $\mathcal{L}$ is generated by the elliptic rotation under consideration, i.e.
	\[
		\mathcal{L}(u) = R^\mathrm{E} (u) \mathcal{L}(0) R^\mathrm{E} (u)^\star.
	\]
Thus, we find that
	\[
		f(u) \gamma(u) - 2 e_3 = \mathcal{L} (u) = R^\mathrm{E} (u) \mathcal{L}(0) R^\mathrm{E} (u)^\star = f(0) \gamma(u) - 2 e_3,
	\]
and we have $f(u) = f(0) =: a$ is a constant function so that
	\[
		\mathcal{L}(u) = a \gamma(u) - 2 e_3.
	\]

Considering $\gamma$ and $\mathcal{L}$ as the Bj\"{o}rling data, we use the Bj\"{o}rling representation in Theorem~\ref{thm:main} to calculate that
	\[
		G = \frac{a-2}{a+2} e^{2iu}, \quad\text{and}\quad \Omega = \frac{-i(a+2)^2}{8} e^{-2iu} \dif{u}.
	\]

To obtain explicit parametrizations, we analytically extend $G$ and $\Omega$ and change the variables via
	\[
		\frac{a-2}{a+2} e^{2iw} =: z.
	\]
Then
	\[
		G= z, \quad
		\Omega = \lambda \frac{\dif{z}}{z^2}, \quad
		\lambda := \frac{4-a^2}{16},
	\]
allowing us to use Fact~\ref{fact:sol1} to compute the explicit parametrizations for the elliptic catenoid:
	\[
		X^{\mathrm{E}}
			= \begin{pmatrix}
				e^{(a-2)v} & e^{2iu + av} \\
				e^{-2iu + av} & e^{(a+2)v}
			\end{pmatrix}
			= R^\mathrm{E}(u) e^{av}
			\begin{pmatrix}
				e^{-2v} & 1 \\
				1 & e^{2v}
			\end{pmatrix}
			R^\mathrm{E}(u)^\star
	\]
where $w = u + i v$ (see also Figure~\ref{Ecatenoid}).

\begin{figure}
	\centering
	\begin{minipage}{0.46\linewidth}
		\centering
		\includegraphics[width=0.8\textwidth]{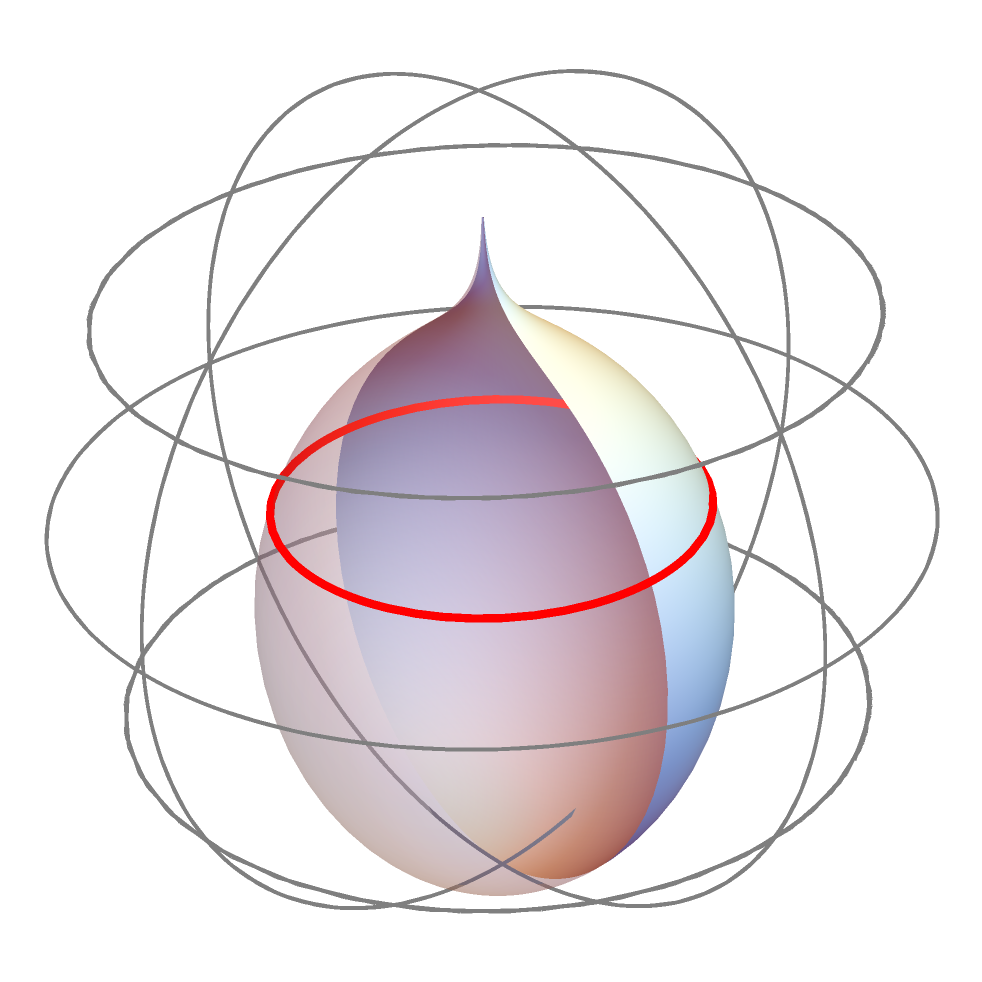}
	\end{minipage}
	\begin{minipage}{0.46\linewidth}
		\centering
		\includegraphics[width=0.8\textwidth]{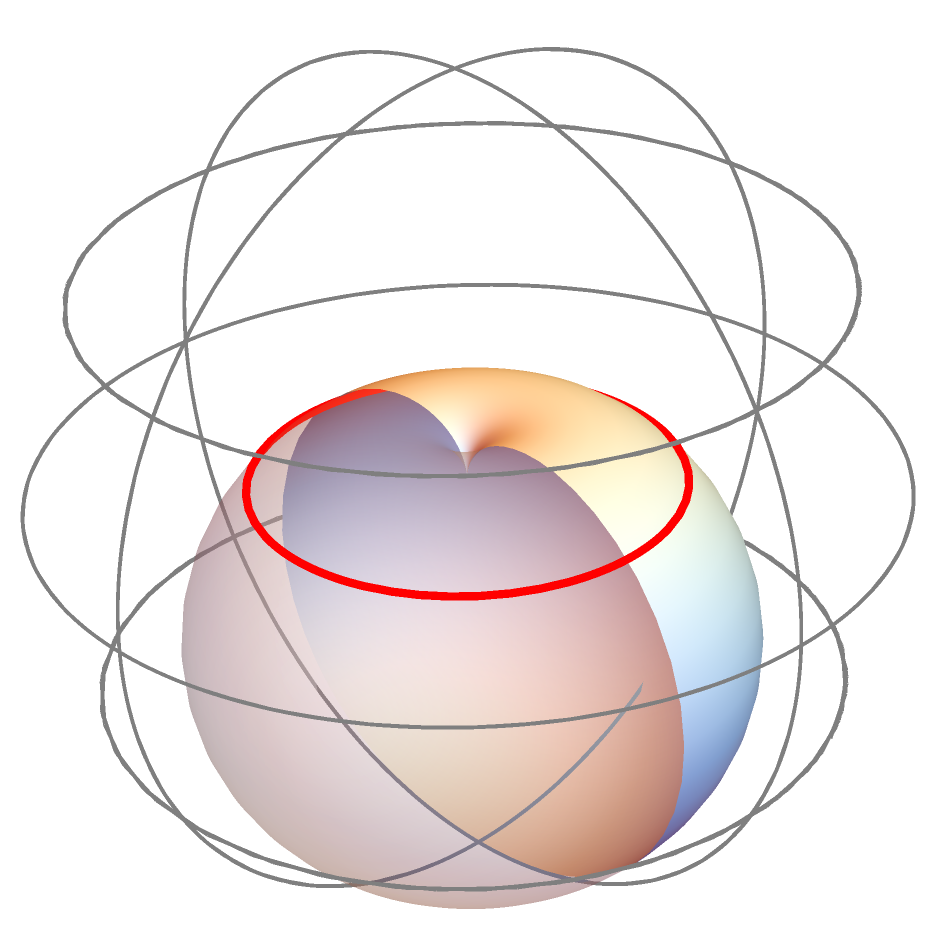}
	\end{minipage}
    \caption{Elliptic catenoids in $3$-dimensional light cone with $a = \frac{3}{2}$ on the left, and $a = 4$ on the right, where the initial given curve is highlighted.}
    \label{Ecatenoid}  
\end{figure}

\subsection{Hyperbolic catenoids}
Given a one parameter group of hyperbolic rotations
	\[
		R^\mathrm{H}(u) = \begin{pmatrix} e^u & 0 \\ 0 & e^{-u} \end{pmatrix},
	\]
the curve
	\[
		\gamma(u) := 
			R^\mathrm{H}(u)
			\begin{pmatrix} 1 & 1 \\ 1 & 1 \end{pmatrix}
			R^\mathrm{H}(u)^\star
			= \begin{pmatrix} e^{2u} & 1 \\ 1 & e^{-2u} \end{pmatrix}
	\]
is a hyperbolic circle.
Note that $\langle \dot{\gamma}, \dot{\gamma} \rangle = 4$ and that $\gamma(0) = \begin{psmallmatrix} 1 & 0 \\ 1 & 1 \end{psmallmatrix} f_3 \begin{psmallmatrix} 1 & 0 \\ 1 & 1 \end{psmallmatrix}^\star$.

To find the Björling data, we find that any $\mathcal{L}$ satisfying the conformality condition must take the form
	\[
		\mathcal{L}_\pm(u) = f(u)\gamma(u) \pm f_2,
	\]
for any function $f$.
Since $\mathcal{L}_+$ fails the orientability condition, we set $\mathcal{L} = \mathcal{L}_-$.
We also have that $\mathcal{L}$ is generated by the rotation under consideration; thus,
	\[
		\mathcal{L}(u) = b \gamma(u) \pm f_2,
	\]
for some constant $b$.

Now, we calculate the Bj\"{o}rling data as
	\[
		G = \frac{b+2i}{b-2i} e^{2u}, \quad\text{and}\quad \Omega = \frac{1}{8} (b-2i)^2 e^{-2u} \dif{u}.
	\]
Analytically extending $G$ and $\Omega$ and letting
	\[
		\frac{b+2i}{b-2i} e^{2w} = z, 
	\]
so that
	\[
		G= z, \qquad
		\Omega = \lambda \frac{\dif{z}}{z^2}, \qquad
		\lambda = \frac{4+b^2}{16},
	\]
we can use Fact~\ref{fact:sol1} to compute the explicit parametrizations for hyperbolic catenoid:
	\[
		X^{\mathrm{H}} 
			= \begin{pmatrix}
				e^{2u + bv} & e^{(b + 2i)v} \\
				e^{(b-2i)v} & e^{-2u + bv}
			\end{pmatrix}
			= R^\mathrm{H}(u)
			e^{bv}\begin{pmatrix}
				1 & e^{2iv} \\
				e^{-2iv} & 1
			\end{pmatrix}
			R^\mathrm{H}(u)^\star
	\]
where $w = u + i v$ (see Figure~\ref{Hcatenoid}).

\begin{figure}
	\centering
	\includegraphics[width=0.4\textwidth]{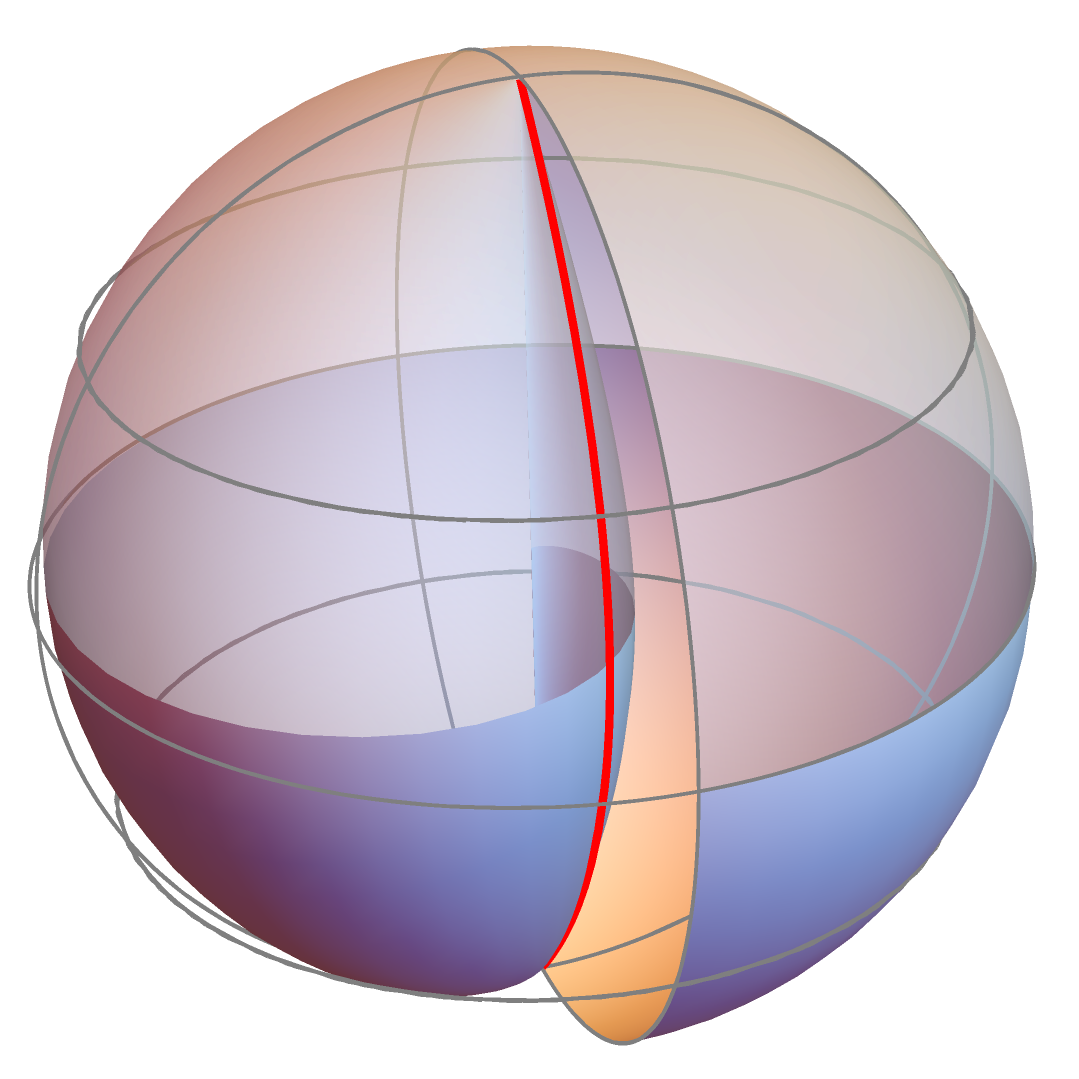}
	\caption{Hyperbolic catenoid in the $3$-dimensional light cone with parameter $b = \frac{3}{2}$, where the initial given curve is highlighted.}
	\label{Hcatenoid}    
\end{figure}



\subsection{Parabolic catenoids}
Finally, we consider the parabolic rotation
	\[
		R^\mathrm{P}(u) = \begin{pmatrix} 1 & u-1 \\ 0 & 1 \end{pmatrix}
	\]
applied to the point $\begin{psmallmatrix} 1 & 1 \\ 1 & 1 \end{psmallmatrix}$ to obtain a parabolic circle
	\[
		\gamma(u) = R^\mathrm{P}(u)
			\begin{pmatrix} 1 & 1 \\ 1 & 1 \end{pmatrix}
			R^\mathrm{P}(u)^\star
			= \begin{pmatrix} u^2 & u \\ u & 1 \end{pmatrix}.
	\]
Note that $\langle \dot{\gamma}, \dot{\gamma} \rangle = 1$ and that $\gamma(1) = \begin{psmallmatrix} 1 & 0 \\ 1 & 1 \end{psmallmatrix} f_3 \begin{psmallmatrix} 1 & 0 \\ 1 & 1 \end{psmallmatrix}^\star$.

We note that any $\mathcal{L}$ satisfying the conformality condition must satisfy
	\[
		\mathcal{L}_\pm(u) := f(u) \gamma(u) \pm f_2
	\]
for some function $f$, and direct calculation tells us that $\mathcal{L}_-$ fails the orientability condition.
Thus, we choose $\mathcal{L} = \mathcal{L}_+$, and the fact that $\mathcal{L}$ must generated by the rotation under consideration allows us to deduce that
	\[
		\mathcal{L}(u) =  c \gamma(u) + f_2.
	\]
for some constant $c$.

Using $\gamma$ and $\mathcal{L}$ as the Bj\"{o}rling data, we calculate that
	\[
		G = u + \frac{2i}{c}, \quad\text{and}\quad \Omega = \frac{c^2}{4} \dif{u}.
	\]
Analytically extending $G$ and $\Omega$ and making change of coordinates so that
	\[
		w + \frac{2i}{c} = z, 
	\]
we have
	\[
		G= z, \qquad
		\Omega = \frac{c^2}{4} \dif{z}.
	\]
This allows us to use Fact~\ref{fact:sol2} to obtain explicit parametrizations for the parabolic catenoids:
	\[
		X^{\mathrm{P}} 
			= \begin{pmatrix}
				e^{cv}(u^2 + v^2) & e^{cv}(u +i v) \\
				e^{cv}(u-i v) & e^{cv}
			\end{pmatrix}
			= R^\mathrm{P}(u + 1)
			e^{cv} \begin{pmatrix}
				v^2 & i v \\
				-iv & 1
			\end{pmatrix}
			R^\mathrm{P}(u + 1)^\star
	\]
where $w = u + iv$ (see Figure~\ref{Pcatenoid}).

\begin{figure}
	\centering
	\includegraphics[width=0.4\textwidth]{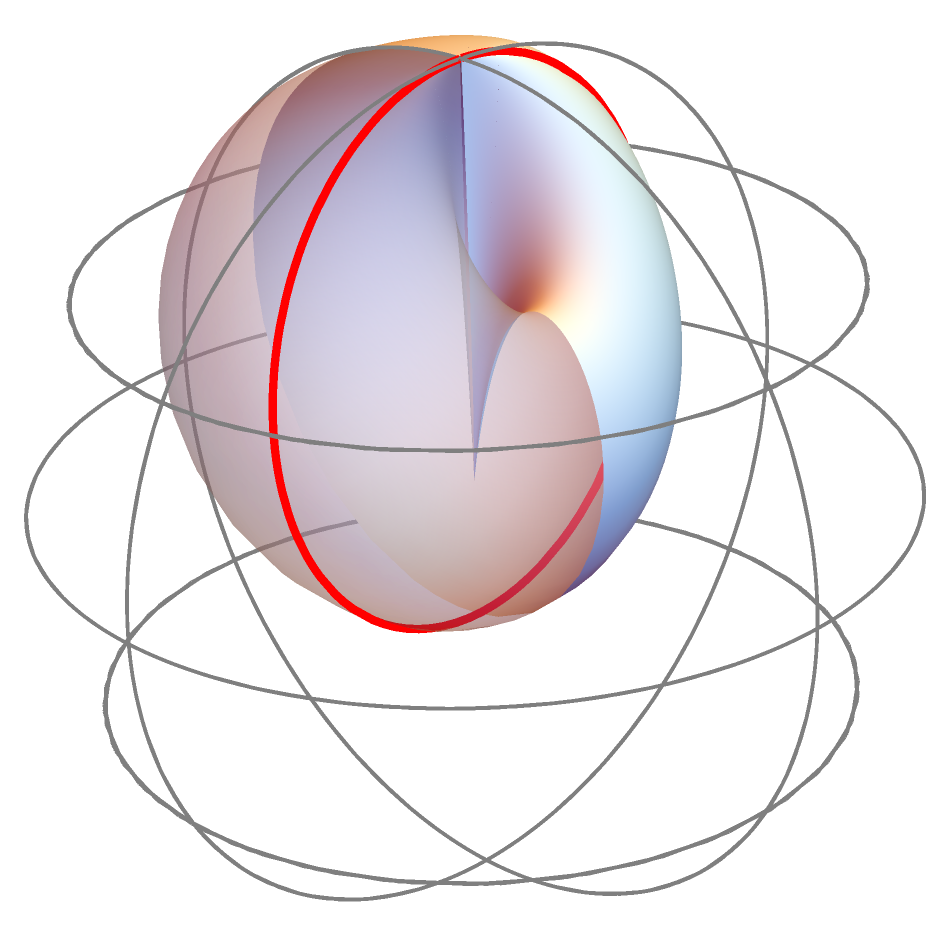}
	\caption{Parabolic catenoid in $3$-dimensional light cone with parameter $b = \frac{1}{2}$, where the initial given curve is highlighted.}
	\label{Pcatenoid}
\end{figure}


\subsection{Classification of rotationally invariant zero mean curvature surfaces}
Any circle in $\qthreep$ is congruent to one of the circles we constructed as orbits of points under rotations up to homotheties and isometries.
Thus we conclude:
\begin{theorem}\label{thm:class}
	Any rotationally invariant zero mean curvature surfaces in $\qthreep$ must be a piece of one of the following surfaces (given with its respective Weierstrass data):
	\begin{itemize}
		\item elliptic catenoid $(G = \frac{a-2}{a+2}e^{2iw}, \Omega = \frac{-i(a+2)^2}{8}e^{-2iw} \dif{w})$
		\item hyperbolic catenoid $(G = \frac{b+2i}{b-2i}e^{2w}, \Omega = \frac{(b-2i)^2}{8}e^{-2w} \dif{w})$, or
		\item parabolic catenoid $(G = w + \frac{2i}{c}, \Omega=\frac{c^2}{4} \dif{w})$
	\end{itemize}
up to homotheties and isometries of $\qthreep$.
\end{theorem}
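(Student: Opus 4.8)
The plan is to reduce the statement to the uniqueness half of Theorem~\ref{thm:main} after normalizing the generating data, exactly as the sentence preceding the theorem suggests. First I would note that a rotationally invariant zero mean curvature surface $X$, being by definition the orbit of a curve under a one-parameter rotation group $\{R(u)\} \subset \mathrm{O}(P^2)$, passes through each of its points along the orbit of that point. Fixing one point $p$ on $X$, its orbit $\gamma(u) := R(u)\,p\,R(u)^\star$ is therefore a spacelike analytic curve lying in $X$ (spacelike because $X$ is a spacelike immersion), and I would take $\gamma$ as the initial curve and $\mathcal{L}(u) := X_v(u,0)$ as the prescribed tangent field of a set of Bj\"orling data.

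The crux is the normalization claimed just before the theorem: up to the isometries $X \mapsto FXF^\star$ with $F \in \sltc$ and the homotheties $X \mapsto \lambda X$ with $\lambda > 0$ (both of which preserve $\qthreep$ and the zero mean curvature condition, and the latter of which, one checks, leaves $\bfOmega$ and hence the Weierstrass pair $(G,\Omega)$ unchanged), every such orbit circle is congruent to one of the three model circles of the preceding subsections. To prove this I would classify the one-parameter rotation subgroups by the causal character of the pointwise-fixed plane $P^2$. Passing to $\sltc$, a traceless generator is, up to conjugacy, either diagonal with imaginary eigenvalues, diagonal with real eigenvalues, or nilpotent; a short computation of the pointwise-fixed set shows these are precisely the cases in which $P^2$ is Lorentzian, Riemannian, or degenerate, and that a generator with a genuinely complex eigenvalue pair fixes no $2$-plane pointwise and is thus excluded. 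This yields the representatives $R^{\mathrm{E}}$, $R^{\mathrm{H}}$, $R^{\mathrm{P}}$, after which the residual stabilizer of the rotation group together with a homothety moves $p$ to the model base point $\begin{psmallmatrix} 1 & 1 \\ 1 & 1 \end{psmallmatrix}$, recovering the circles $\gamma$ of the elliptic, hyperbolic, and parabolic subsections.

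With $\gamma$ normalized I would pin down $\mathcal{L}$. Differentiating the invariance relation $R(s)X(u,v)R(s)^\star = X(u+s,v)$ in $v$ at $v=0$ gives $\mathcal{L}(u) = R(u)\mathcal{L}(0)R(u)^\star$, so $\mathcal{L}$ is generated by the rotation. Imposing the conformality condition \eqref{Eq:202301221105PM} restricts $\mathcal{L}$ to the one-parameter family $\mathcal{L}_\pm = f\gamma \pm (\text{fixed spacelike vector})$ computed in each subsection; the orientability condition of Lemma~\ref{lemma:orientation} selects the admissible sign, and invariance forces the coefficient $f$ to be a real constant $a$, $b$, or $c$. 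This is exactly the Bj\"orling data treated in each of the three subsections.

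Finally I would apply the uniqueness part of Theorem~\ref{thm:main} to conclude that $X$ coincides with the catenoid already constructed from this data, whose Weierstrass pair $(G,\Omega)$ is produced through Fact~\ref{fact:sol1} in the elliptic and hyperbolic cases and Fact~\ref{fact:sol2} in the parabolic case, giving the three families in the statement. I expect the main obstacle to be the normalization step of the second paragraph: establishing that the causal trichotomy of $P^2$ exhausts all rotational one-parameter groups, and then exploiting the homothety and stabilizer freedom so as to reduce every orbit circle to one of the three model circles rather than to merely congruent data.
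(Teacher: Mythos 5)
Your proposal is correct and takes essentially the same route as the paper: the three preceding subsections already construct the catenoids from rotation-invariant Bj\"orling data, and the paper's entire proof is the one-line assertion that every orbit circle in $\qthreep$ is congruent, up to isometries and homotheties, to one of the three model circles, combined implicitly with the uniqueness in Theorem~\ref{thm:main}. The only difference is that you flesh out that normalization step (the conjugacy classification of traceless generators of $\mathfrak{sl}(2,\mathbb{C})$ into elliptic, hyperbolic, and nilpotent types, with loxodromic ones excluded because they fix no $2$-plane pointwise), which the paper leaves unproved.
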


\subsection{Additional example with analytic extensions}\label{sec:analytic}
As in the parabolic catenoid case, let us take the parabolic circle as the initial curve, i.e.\
	\[
		\gamma(u) = R^\mathrm{P}(u)
			\begin{pmatrix} 1 & 1 \\ 1 & 1 \end{pmatrix}
				R^\mathrm{P}(u)^\star
			= \begin{pmatrix} u^2 & u \\ u & 1 \end{pmatrix}.
	\]
We have seen that $\mathcal{L} = f \gamma + f_2$ for any function $f$ satisfies both the conformality condition and the orientability condition.

Now, if we take
	\[
		f(u) = \frac{c}{u}
	\]
for $u \in (0, \infty)$, then we have $\mathcal{L} = \frac{c}{u} \gamma + f_2$ is not generated by the rotation under consideration; thus, the resulting surface will not be rotationally invariant.
We can still calculate that
	\[
		G = \frac{c + 2i}{c} u, \qquad \Omega = \frac{c^2}{4u^2} \dif{u}.
	\]
Analytically extending $G$ and $\Omega$ and letting
	\[
		\frac{c + 2i}{c} w =: z, 
	\]
so that
	\[
		G= z, \qquad
		\Omega = \lambda \frac{\dif{z}}{z^2}, \qquad
		\lambda = \frac{c(c+2i)}{4},
	\]
we can use Fact~\ref{fact:sol1} to compute the explicit parametrizations:
	\[
		X  = e^{cv}\begin{pmatrix}
			e^{2u} & e^{u+iv} \\
			e^{u -i v} & 1
			\end{pmatrix},
	\]
where $w = e^{u+iv}$ (see Figure~\ref{fig:paraTwosurf}).

\begin{figure}
	\centering
	\begin{minipage}{0.46\linewidth}
		\centering
		\includegraphics[width=0.8\textwidth]{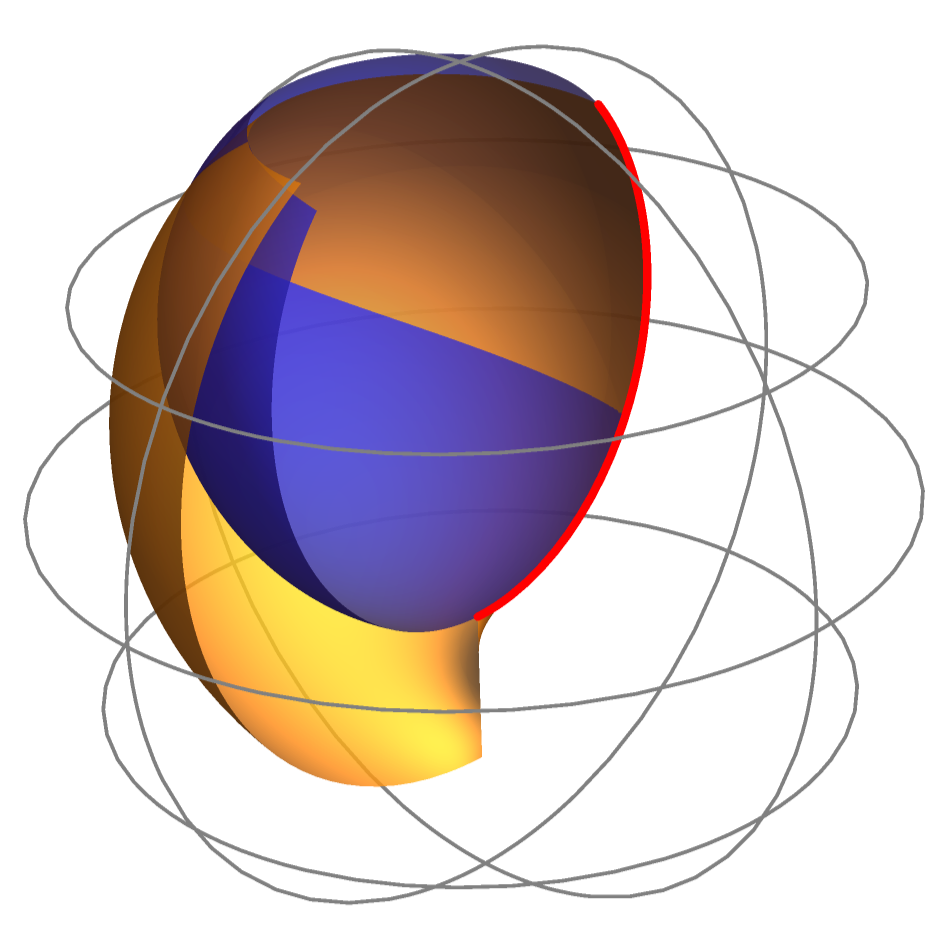}
	\end{minipage}
	\begin{minipage}{0.46\linewidth}
		\centering
		\includegraphics[width=0.8\textwidth]{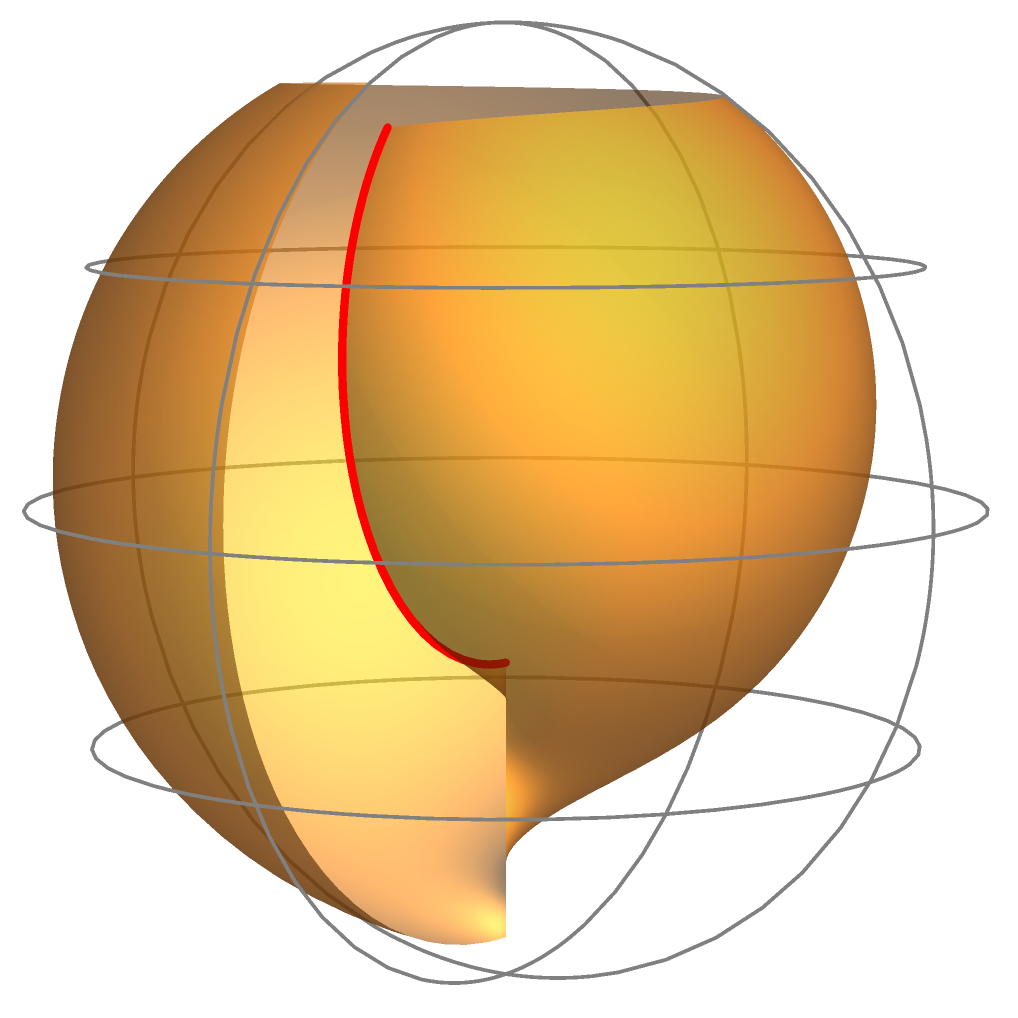}
	\end{minipage}
	\caption{A parabolic catenoid and a non-rotational zero mean curvature surface sharing the same initial curve, where the  initial given curve is highlighted (on the left); the non-rotational zero mean curvature surface drawn over bigger domain (on the right).}
	\label{fig:paraTwosurf}  
\end{figure}

If we change parameters so that $\tilde{u} = e^u$, then we obtain 
		\[
			\tilde{X} = e^{cv} \begin{pmatrix}
				\tilde{u}^2 & e^{iv} \tilde{u} \\
				e^{-iv} \tilde{u} & 1
				\end{pmatrix}
		\]
	so that
		\[
			\langle \tilde{X}_v, \tilde{X}_v \rangle = e^{2bv} \tilde{u}^2.
		\]
	This implies $L(v) := \tilde{X} (0, v)$ is a lightlike curve in $\qthreep$.
	In fact,
		\begin{align*}
			L(v) &= \tilde{X}(0,v) = \begin{pmatrix} 0 & 0 \\ 0 & e^{cv} \end{pmatrix}\\
				&= \begin{pmatrix} e^{\frac{-cv}{2}} & 0 \\ 0 & e^{\frac{cv}{2}} \end{pmatrix}
					\begin{pmatrix} 0 & 0 \\ 0 & 1 \end{pmatrix}
					\begin{pmatrix} e^{\frac{-cv}{2}} & 0 \\ 0 & e^{\frac{cv}{2}} \end{pmatrix}^\star \\
				&= R^\mathrm{H} (\tfrac{-cv}{2}) \begin{pmatrix} 0 & 0 \\ 0 & 1 \end{pmatrix} R^\mathrm{H} (\tfrac{-cv}{2})^\star,
		\end{align*}
	and thus $L$ is a lightlike circle.
	Therefore, we have that $\tilde{X}$ is an analytic extension of $X$ across the lightlike circle $L$  (see Figure~\ref{fig:analExt}).
	
\begin{figure}
	\centering
	\begin{minipage}{\linewidth}
		\centering
		\includegraphics[width=0.4\textwidth]{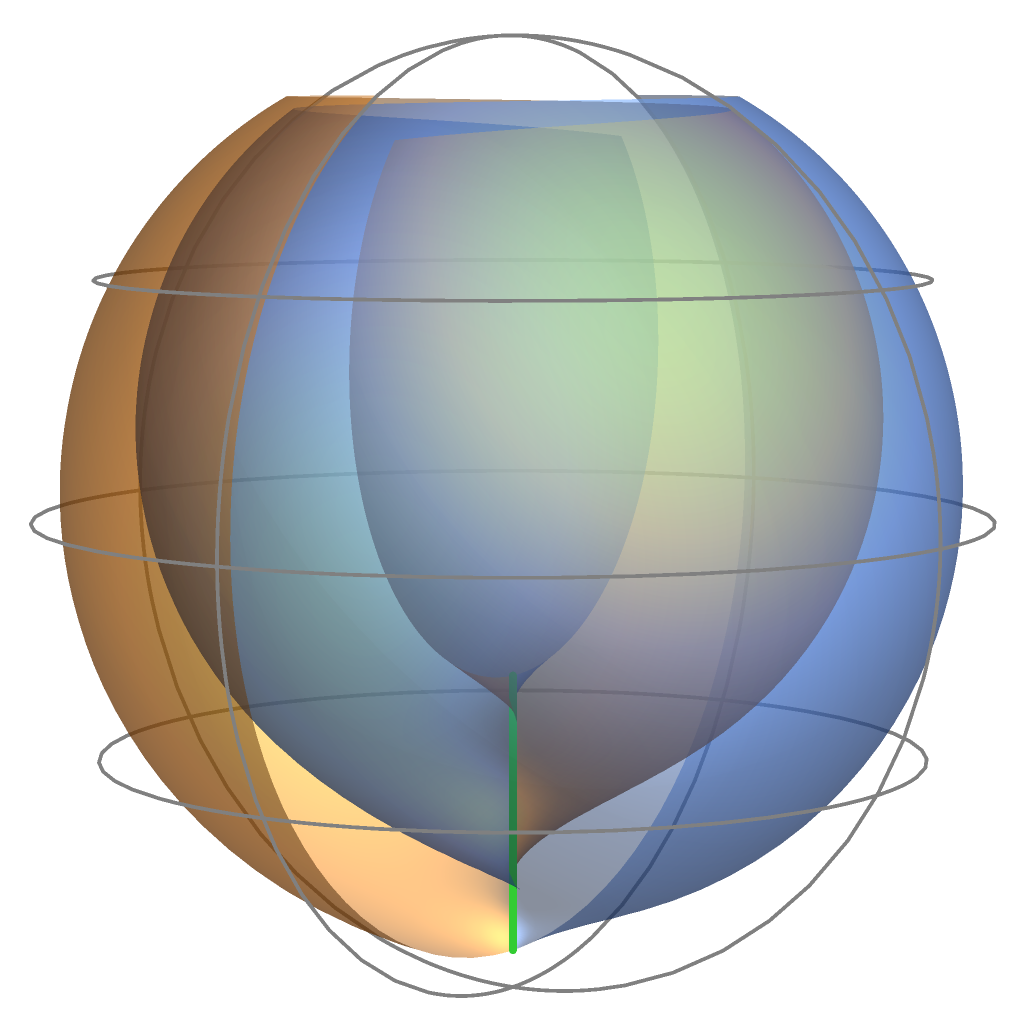}
	\end{minipage}
	\begin{minipage}{0.35\linewidth}
		\centering
		\includegraphics[width=\textwidth]{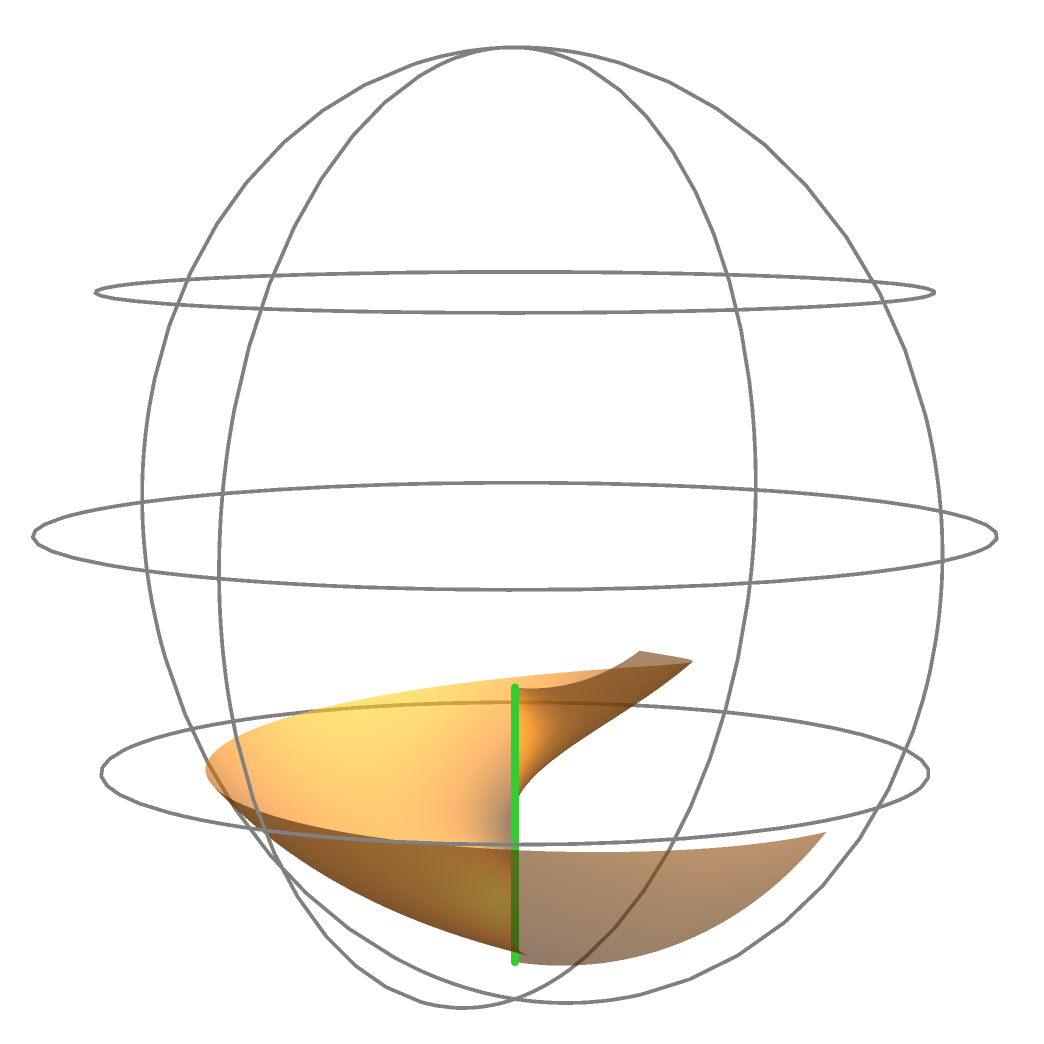}
	\end{minipage}
	\begin{minipage}{0.35\linewidth}
		\centering
		\includegraphics[width=\textwidth]{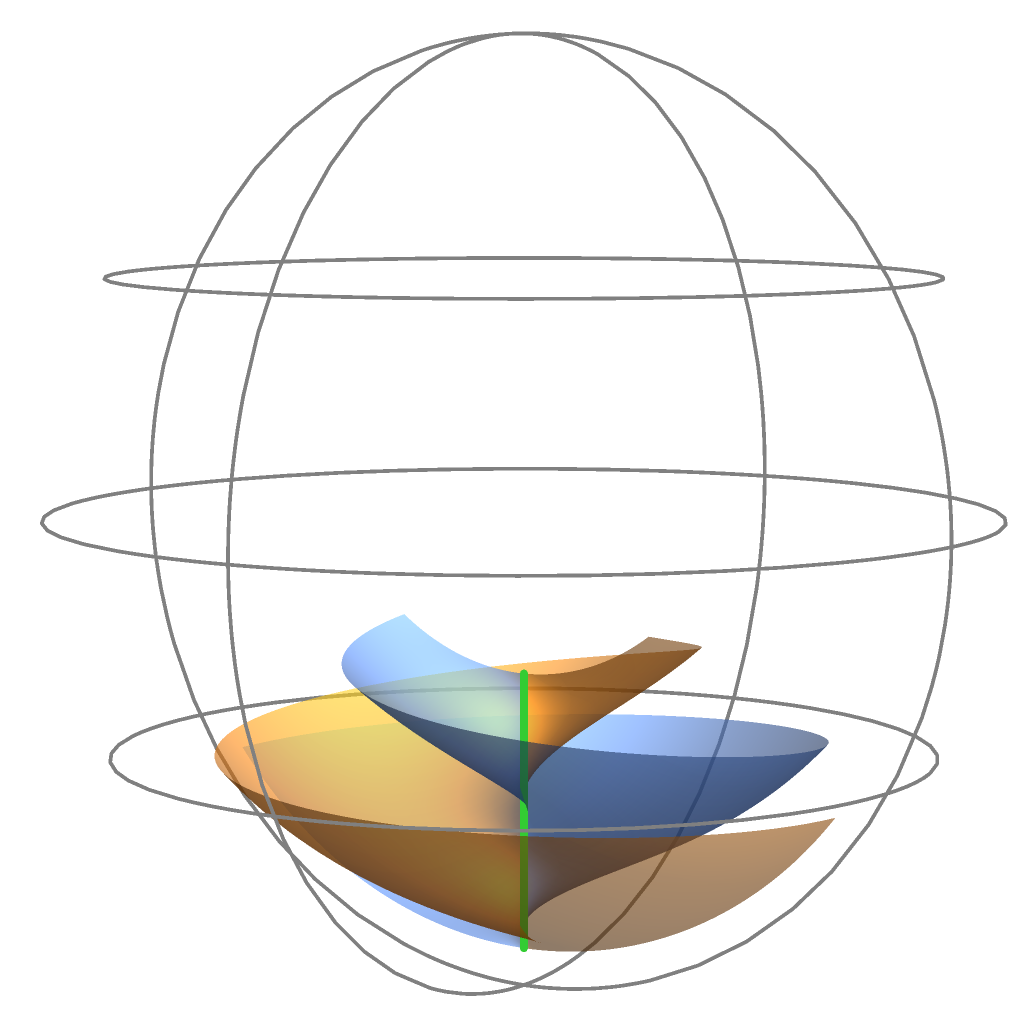}
	\end{minipage}
	\caption{Analytic extension of the non-rotational zero mean curvature surface across a lightlike circle (on the top); closer look at the analytic extension near the lightlike circle (on the bottom). On all figures, the lightlike circle is highlighted.}
	\label{fig:analExt}  
\end{figure}

\textbf{Acknowledgements.}
The last author gratefully acknowledges the support from NRF of Korea (2017R1E1A1A03070929 and NRF 2020R1F1A1A01074585).

\begin{bibdiv}
\begin{biblist}

\bib{abe_constant_2018}{article}{
	author={Abe, Mitsugu},
	author={Cho, Joseph},
	author={Ogata, Yuta},
	title={Constant mean curvature surfaces in hyperbolic 3-space with curvature lines on horospheres},
	date={2018},
	journal={Kobe J. Math.},
	volume={35},
	number={1-2},
	pages={21\ndash 44},
	review={\MR{3890133}},
}

\bib{akamine_reflection_2021}{article}{
	author={Akamine, Shintaro},
	author={Fujino, Hiroki},
	title={Reflection principle for lightlike line segments on maximal surfaces},
	date={2021},
	journal={Ann. Global Anal. Geom.},
	volume={59},
	number={1},
	pages={93\ndash 108},
	review={\MR{4198711}},
	doi={10.1007/s10455-020-09743-4}
}

\bib{akamine_reflection_2022}{article}{
	author={Akamine, Shintaro},
	author={Fujino, Hiroki},
	title={Reflection principles for zero mean curvature surfaces in the simply isotropic 3-space},
	date={2022},
	journal={Results Math.},
	volume={77},
	number={4},
	pages={Paper No. 176, 13},
	review={\MR{4453362}},
	doi={10.1007/s00025-022-01718-0}
}

\bib{akamine_space-like_2019}{article}{
	author={Akamine, Shintaro},
	author={Umehara, Masaaki},
	author={Yamada, Kotaro},
	title={Space-like maximal surfaces containing entire null lines in {{Lorentz-Minkowski}} 3-space},
	date={2019},
	journal={Proc. Japan Acad. Ser. A Math. Sci.},
	volume={95},
	number={9},
	pages={97\ndash 102},
	review={\MR{4026357}},
	doi={10.3792/pjaa.95.97}
}

\bib{alias_bjorling_2003}{article}{
	author={Al{\'i}as, Luis~J.},
	author={Chaves, Rosa M.~B.},
	author={Mira, Pablo},
	title={Bj\"orling problem for maximal surfaces in {{Lorentz-Minkowski}} space},
	date={2003},
	journal={Math. Proc. Cambridge Philos. Soc.},
	volume={134},
	number={2},
	pages={289\ndash 316},
	review={\MR{1972140}},
	doi={10.1017/S0305004102006503}
}

\bib{asperti_bjorling_2006}{article}{
	author={Asperti, Antonio~C.},
	author={Vilhena, José Antonio~M.},
	title={Björling problem for spacelike, zero mean curvature surfaces in $\mathbb{L}^4$},
	date={2006},
	journal={J. Geom. Phys.},
	volume={56},
	number={2},
	pages={196–213},
	review={\MR{2173893}},
	doi={10.1016/j.geomphys.2005.01.006}
}

\bib{bjorling_integrationem_1844}{article}{
	author={Bj{\"o}rling, E.~G.},
	title={In integrationem aequationis {{Derivatarum}} partialium superficiei, cujus in puncto unoquoque principales ambo radii curvedinis aequales sunt signoque contrario},
	date={1844},
	journal={Arch. Math. Phys. (1)},
	volume={4},
	pages={290\ndash 315},
}

\bib{brander_bjorling_2010}{article}{
	author={Brander, David},
	author={Dorfmeister, Josef~F.},
	title={The {{Bj\"orling}} problem for non-minimal constant mean curvature surfaces},
	date={2010},
	journal={Comm. Anal. Geom.},
	volume={18},
	number={1},
	pages={171\ndash 194},
	review={\MR{2660462}},
	doi={10.4310/CAG.2010.v18.n1.a7}
}

\bib{brander_bjorling_2018}{article}{
	author={Brander, David},
	author={Wang, Peng},
	title={On the {{Bj\"orling}} problem for {{Willmore}} surfaces},
	date={2018},
	journal={J. Differential Geom.},
	volume={108},
	number={3},
	pages={411\ndash 457},
	review={\MR{3770847}},
	doi={10.4310/jdg/1519959622}
}

\bib{do_carmo_rotation_1983}{article}{
	author={{do Carmo}, Manfredo~Perdig{\~a}o},
	author={Dajczer, Marcos},
	title={Rotation hypersurfaces in spaces of constant curvature},
	date={1983},
	journal={Trans. Amer. Math. Soc.},
	volume={277},
	number={2},
	pages={685\ndash 709},
	review={\MR{694383}},
	doi={10.2307/1999231}
}

\bib{dussan_bjorling_2017}{article}{
	author={Dussan, M.~P.},
	author={Franco~Filho, A.~P.},
	author={Magid, M.},
	title={The Björling problem for timelike minimal surfaces in $\mathbb{R}^4_1$},
	date={2017},
	journal={Ann. Mat. Pura Appl. (4)},
	volume={196},
	number={4},
	pages={1231–1249},
	review={\MR{3673665}},
	doi={10.1007/s10231-016-0614-3}
}

\bib{fujimori_analytic_2022}{article}{
	author={Fujimori, Shoichi},
	author={Kawakami, Yu},
	author={Kokubu, Masatoshi},
	author={Rossman, Wayne},
	author={Umehara, Masaaki},
	author={Yamada, Kotaro},
	author={Yang, Seong-Deog},
	title={Analytic extensions of constant mean curvature one geometric catenoids in de {{Sitter}} 3-space},
	date={2022},
	journal={Differential Geom. Appl.},
	volume={84},
	pages={Paper No. 101924, 35},
	review={\MR{4451410}},
	doi={10.1016/j.difgeo.2022.101924}
}

\bib{fujimori_zero_2015-1}{article}{
	author={Fujimori, Shoichi},
	author={Kim, Young~Wook},
	author={Koh, S.-E.},
	author={Rossman, Wayne},
	author={Shin, H.},
	author={Umehara, Masaaki},
	author={Yamada, Kotaro},
	author={Yang, Seong-Deog},
	title={Zero mean curvature surfaces in {{Lorentz-Minkowski}} 3-space which change type across a light-like line},
	date={2015},
	journal={Osaka J. Math.},
	volume={52},
	number={1},
	pages={285\ndash 297},
	review={\MR{3326612}},
}

\bib{kim_spacelike_2011}{article}{
	author={Kim, Young~Wook},
	author={Koh, Sung-Eun},
	author={Shin, Heayong},
	author={Yang, Seong-Deog},
	title={Spacelike maximal surfaces, timelike minimal surfaces, and {{Bj\"orling}} representation formulae},
	date={2011},
	journal={J. Korean Math. Soc.},
	volume={48},
	number={5},
	pages={1083\ndash 1100},
	review={\MR{2850077}},
	doi={10.4134/JKMS.2011.48.5.1083}
}

\bib{kim_prescribing_2007}{article}{
	author={Kim, Young~Wook},
	author={Yang, Seong-Deog},
	title={Prescribing singularities of maximal surfaces via a singular {{Bj\"orling}} representation formula},
	date={2007},
	journal={J. Geom. Phys.},
	volume={57},
	number={11},
	pages={2167\ndash 2177},
	review={\MR{2360235}},
	doi={10.1016/j.geomphys.2007.04.006}
}

\bib{liu_surfaces_2007}{article}{
	author={Liu, Huili},
	title={Surfaces in the lightlike cone},
	date={2007},
	journal={J. Math. Anal. Appl.},
	volume={325},
	number={2},
	pages={1171\ndash 1181},
	review={\MR{2270077}},
	doi={10.1016/j.jmaa.2006.02.064}
}

\bib{liu_representation_2011}{article}{
	author={Liu, Huili},
	title={Representation of surfaces in 3-dimensional lightlike cone},
	date={2011},
	journal={Bull. Belg. Math. Soc. Simon Stevin},
	volume={18},
	number={4},
	pages={737\ndash 748},
	review={\MR{2907616}},
	doi={10.36045/bbms/1320763134}
}

\bib{liu_hypersurfaces_2008}{article}{
	author={Liu, Huili},
	author={Jung, Seoung~Dal},
	title={Hypersurfaces in lightlike cone},
	date={2008},
	journal={J. Geom. Phys.},
	volume={58},
	number={7},
	pages={913\ndash 922},
	review={\MR{2426248}},
	doi={10.1016/j.geomphys.2008.02.011}
}

\bib{pember_weierstrass-type_2020}{article}{
	author={Pember, Mason},
	title={Weierstrass-type representations},
	date={2020},
	journal={Geom. Dedicata},
	volume={204},
	number={1},
	pages={299\ndash 309},
	review={\MR{4056704}},
	doi={10.1007/s10711-019-00456-y}
}

\bib{seo_zero_2021}{article}{
	author={Seo, Jin~Ju},
	author={Yang, Seong-Deog},
	title={Zero mean curvature surfaces in isotropic three-space},
	date={2021},
	journal={Bull. Korean Math. Soc.},
	volume={58},
	number={1},
	pages={1\ndash 20},
	review={\MR{4206079}},
	doi={10.4134/BKMS.b190783}
}

\bib{umehara_maximal_2006}{article}{
	author={Umehara, Masaaki},
	author={Yamada, Kotaro},
	title={Maximal surfaces with singularities in {{Minkowski}} space},
	date={2006},
	journal={Hokkaido Math. J.},
	volume={35},
	number={1},
	pages={13\ndash 40},
	review={\MR{2225080}},
	doi={10.14492/hokmj/1285766302}
}

\bib{umehara_hypersurfaces_2019}{article}{
	author={Umehara, Masaaki},
	author={Yamada, Kotaro},
	title={Hypersurfaces with light-like points in a {{Lorentzian}} manifold},
	date={2019},
	journal={J. Geom. Anal.},
	volume={29},
	number={4},
	pages={3405\ndash 3437},
	review={\MR{4015443}},
	doi={10.1007/s12220-018-00118-7}
}

\bib{weierstrass_untersuchungen_1866}{article}{
	author={Weierstrass, Karl~T.},
	title={Untersuchungen \"uber die {{Fl\"achen}}, deren mittlere {{Kr\"ummung}} \"uberall gleich {{Null}} ist},
	date={1866},
	journal={Monatsber. Berliner Akad.},
	pages={612\ndash 625},
}

\bib{yang_bjorling_2017}{article}{
	author={Yang, Seong-Deog},
	title={Bj\"orling formula for mean curvature one surfaces in hyperbolic three-space and in de {{Sitter}} three-space},
	date={2017},
	journal={Bull. Korean Math. Soc.},
	volume={54},
	number={1},
	pages={159\ndash 175},
	review={\MR{3614568}},
	doi={10.4134/BKMS.b150984}
}

\end{biblist}
\end{bibdiv}

%
\end{document}